\documentclass{article}
\usepackage[utf8]{inputenc}

\usepackage{amsfonts}
\usepackage{amsmath}
\usepackage{amssymb}
\usepackage{amsthm}
\usepackage{stmaryrd}
\usepackage{calrsfs}
\DeclareMathAlphabet{\pazocal}{OMS}{zplm}{m}{n}

\usepackage[backend=biber, style=authoryear]{biblatex}
\usepackage{blkarray}
\usepackage{mathtools}
\usepackage{paralist}
\usepackage{tikz}
\usepackage{tikz-cd}

\usepackage{csquotes}
\usepackage{enumitem}

\usepackage{hyperref}

\newtheorem{theorem}{Theorem}[]
\newtheorem{corollary}{Corollary}[]
\newtheorem{lemma}{Lemma}[]
\newtheorem{proposition}{Proposition}[]
\theoremstyle{definition}
\newtheorem{definition}{Definition}[]
\theoremstyle{remark}

\title{At the Edge of Putnam's Program: Limitative Results For Computable Inductive Logics}
\author{Antoine Mercier, Josiah Lopez-Wild, and Elijah Spiegel}
\date{Last updated \today}

\begin{document}

\maketitle

\begin{abstract}
    The task of inductive logic is to develop a formal framework to analyze inductive reasoning.
    Historically this was accomplished by assigning probabilities to sentences of a logical language.
    Two natural criteria for such a system are: (i) the underlying language should be rich enough to express scientific hypotheses, and (ii) the probabilities should be, in some sense, accessible.
    The first criterion suggests that the language should at least contain the language of arithmetic, while the second suggests that probabilities should be computable.
    We show that these two criteria are in tension with one another.
    Various natural proposals for an inductive logic result in probabilities that are not arithmetically definable, much less computable.
    We isolate the assumptions responsible for this result, and search for a weaker inductive logic with more accessible probabilities.
    The most natural weakening results in probabilities that are arithmetically definable but still are not computable.
\end{abstract}

\section{Introduction}

The task of inductive logic is to develop a formal framework to analyze inductive reasoning. We do so by assigning probabilities to sentences of a logical language. In simple examples involving coin flips and colored balls this is easily done and the framework ends up resembling elementary probability theory. But as we tackle more sophisticated problems that emerge in scientific contexts, we quickly need the resources of higher mathematics. A more developed inductive logic requires a rich language that can express mathematically structured hypotheses and empirical observations.

In a series of papers, Putnam (\cite{Putnam1956-PUTADO, Putnam_1979}) described two reasonable constraints on the project of developing an inductive logic: 
\begin{itemize}
    \item [(1)] The underlying language should be rich enough to be able to express scientific hypotheses.
    \item[(2)] The probabilities over the language should be in some sense accessible. 
\end{itemize}

The first condition is a matter of the goal of inductive logic. Putnam posited as a ``final goal'' the ``definition of DC [degree of confirmation] for a language rich enough for the formalization of empirical science as a whole.''\footnote{\cite[p. 271]{Putnam_1979}} The second condition is a matter of our means for attaining that goal. Putnam says we can ``think of a system of inductive logic as a design for a `learning machine': that is to say, a design for a \textit{computing machine} that can extrapolate certain kinds of empirical regularities from the data.''\footnote{\cite[p. 297]{Putnam_1979}, emphasis added.} 
Putnam's first condition assures us that our inductive logic can model and analyze the actual practice of science.\footnote{A similar idea has been suggested by \cite{Romeijn2009-ROMILA} although in a slightly different context.} The second condition presses the point that the probabilities over such a language should not be highly unconstructive functions but should be the kinds of mathematical objects that we can use in our reasoning.\footnote{This idea can be found in the work \cite{Sterkenburg2019-STEPDA-11}, and in the project of computable Bayesianism as in \cite{HWZB2026Bounded}, \cite{ZafforaBlando2025}, \cite{ZafforaBlando2024Pride}, \cite{LopezWild2025vNM}, and \cite{pittphilsci23862}, among others.}
In this paper, we explore the subtleties involved in setting up an inductive logic which is amenable to accessible probabilities over rich languages. 
We keep our formalization of `accessible' general by considering complexity classes of definable probabilities. In this way, we can talk about more or less accessible priors where those belonging to the lower complexity classes are seen as more accessible, with computability as the gold standard.

The work of \textcite{G&S1982-GAIPOR} marks an important milestone in the development of an inductive logic satisfying conditions (1) and (2). The authors demonstrate that once our underlying language is capable of interpreting the theory of arithmetic, it becomes strong enough to define probabilities over (the Gödel codes of) its sentences, putting it at risk of bumping into the limitative results of Gödel and Tarski. This demonstrates a tension between conditions (1) and (2): the more expressive our language is, the more complex we need our probabilities over this language to be in order to avoid contradiction with Gödel and Tarski's theorems. 

Gaifman has argued that this tradeoff between the expressivity of the language and the complexity of the prior -- which he credits Putnam as having first identified -- is an important and neglected problem in the philosophy of probability.\footnote{\cite{Hendricks2009-HENPAS-5}.} For reasons we will explain shortly, we believe that there is still no fully satisfying framework in the literature to study computable priors over arithmetic languages. The Gaifman and Snir framework comes close and yields many results which provide important insights into this complexity tradeoff, but their framework and those like it face problems in the way definable probabilities are defined.

% The goal of this paper is to demonstrate that assumptions ubiquitously baked-in to the setup of these inductive logics directly clash with conditions (1) and (2). Indeed, the most common ways of defining a probability over a logical language involve some basic assumptions about the domains of the models and a condition on how probability functions should behave over quantified sentences. As we will demonstrate, these assumptions block inductive logics from admitting probabilities which are definable anywhere on the arithmetic complexity hierarchy. While these assumptions are very well-motivated by the intuitions which have guided the formalization of inductive logics, rethinking the setup of these logics in light of conditions (1) and (2) reveals that these assumptions amount to defining a logical framework around a standard interpretation of arithmetic. The results of Gödel, Tarski, Löwenheim, and Skolem, have shown us that such standard interpretations are not definable in first-order languages. 

The structure of the paper is as follows. In \S 2, we set out notation and conventions. In \S 3, we show that existing frameworks like Gaifman and Snir's make fundamental assumptions which block them from satisfying Putnam's conditions. In \S 4, we show how the problem emerges from a fixed domain assumption and what's know as the Gaifman condition which are both common to many frameworks for inductive logic. This motivates the framework we present in \S 5, which is defined in terms of the provability relation. We show that this framework fares better in the complexity tradeoff by admitting arithmetically definable priors of relatively low complexity. However, we show that this is still insufficient to achieve our main goal of finding a computable prior over an arithmetic language. 

% In order to allow for arithmetically definable priors -- and hence, open the possibility that our prior is `of some use to somebody' -- we consider a framework for inductive logic where we define probabilities in terms of the provability relation which we know to be arithmetically ($\Sigma_1$) definable. We will then show that this framework fairs better in the complexity tradeoff by admitting arithmetically definable priors of relatively low complexity. It will then be shown, however, that dropping these assumptions is still insufficient to achieve our main goal of finding a computable prior over an arithmetic language. We will then isolate the problem of logical omniscience as the final blockade for developing a theory of computable probabilities over arithmetic languages.
% Revisiting the set up of an inductive logic from the point of view of Putnam's conditions provides a new lens to analyze the assumptions made in our inductive frameworks as well as provides us with a new motivation for thinking about the old problem of logical omniscience.

\section{Notation and Conventions}

In this paper we are concerned with probabilities defined on sentences in the language $\pazocal{L}_A$ of arithmetic, or some extension thereof.
$\pazocal{L}_A$ is a first-order language augmented with the symbols $S, +, \times, x^y, <$, which are intended to denote the successor function, addition, multiplication, exponentiation, and strict inequality, respectively.
We assume a countable set of variables $x, y, z, x_1, x_2, \ldots$ that bind to the first-order quantifiers $\forall x$, $\exists x$.
We define the set of formulae $\pazocal{FL}_A$ of $\pazocal{L}_A$ inductively in the usual manner; in particular we let $\pazocal{SL}_A$ be the set of sentences of $\pazocal{L}_A$, i.e., the set of formulae with no free variables. 
We also define a set of \textit{numeral constants}: letting $S^n(0)$ be shorthand for $n$ applications of the successor function to $0$, we define the numerals $\underline{n}:=S^n(0)$ for all $n \in \omega$.
We let $\mathsf{PA}$ denote the first-order theory of Peano arithmetic.
We also let $\omega$ denote the first countable ordinal and let $\mathbb{N}:= \langle \omega, \underline{0}, S, +, \times \rangle$ denote the standard model of the natural numbers.
We fix a Gödel numbering $\ulcorner \cdot \urcorner: \pazocal{FL}_A \to \omega$.

As is customary, we define \textit{bounded quantifiers} as follows: if $\varphi(x)$ is a formula of $\pazocal{L}_A$ with $x$ a free variable, then $\exists x < n \,\varphi := \exists x (x < n \land \varphi(x))$ and $\forall x < n \,\varphi := \forall x(x < n \rightarrow \varphi(x))$.
We can then define the usual \textit{arithmetical hierarchy of formulae} by induction on quantifier blocks.
We let $\Delta_0 = \Sigma_0 = \Pi_0$ denote the set of formulae containing only bounded quantifiers.
A formula $\varphi$ is $\Sigma_{n+1}$ if it is of the form $\exists x_1\ldots \exists x_m \, \psi(x_1, \ldots, x_m)$, where $\psi$ is $\Pi^0_{n}$.
A formula $\varphi$ is $\Pi^0_{n+1}$ if it is of the form $\forall x_1 \ldots \forall x_m \, \psi(x_1, \ldots, x_m)$, where $\psi$ is $\Sigma^0_n$.
A formula $\varphi$ is $\Delta^0_n$ if there is a $\Sigma^0_n$ formula $\psi_0$ and a $\Pi^0_n$ formula $\psi_1$ such that $\mathsf{PA} \vdash \varphi \leftrightarrow \psi_0 \leftrightarrow \psi_1$.
As usual, because of Post's Theorem (Soare Theorem 4.2.2), the $\Delta_1$-, $\Sigma_1$-, and $\Pi_1$-definable sets of naturals are precisely the computable, computably enumerable (c.e.), and co-computably enumerable (co-c.e.) sets, respectively.

We define the rational number system $\mathbb{Q}$ in $\pazocal{L}_A$ in the standard way.
This allows us to define real-valued functions in $\pazocal{L}_A$, as follows.
Suppose $g: \omega^{k+1} \to \mathbb{Q}$ is a rational-valued function defined by the $\pazocal{L}_A$-formula $\varphi$, i.e., $\mathsf{PA}$ proves that for all $x_1, \ldots, x_{k+1}$ there is a unique $y$ such that
\[g(x_1, \ldots, x_{k+1}) = y \leftrightarrow \varphi(x_1, \ldots, x_{k+1},y).\]
We define a real-valued function $f: \omega^k \to \mathbb{R}$ via approximations.
We say that $g:\omega^{k+1} \to \mathbb{Q}$ is an \textit{approximation for} $f$ if
    \begin{compactitem}
        \item[-] $dom(g) = dom(f) \times \omega$; and
        \item[-] $|f(x_1, \ldots, x_k) - g(x_1, \ldots, x_k, n)| \leq 2^{-n}$ for all $x_1, \ldots, x_k \in dom(f)$ and $n \in \omega$. 
    \end{compactitem}
In this case we say that $f$ is defined by the formula $\varphi$.
Thus we say that $f: \omega^k \to \mathbb{R}$ is $\Delta_n$ (resp. $\Sigma_n$, $\Pi_n$) if and only if $f$ is defined by a $\Delta_n$ (resp. $\Sigma_n$, $\Pi_n$) formula $\varphi$.
In particular we say that $f$ is \textit{arithmetically definable} if there is a $\pazocal{L}_A$-formula $\varphi$ such that $f$ is defined by $\varphi$.
Finally, if $\psi \in \pazocal{SL}_A$, we say that a set (resp. relation, function) is $\Delta^\psi_n$-, $\Sigma^\psi_n$-, or $\Pi^\psi_n$-\textit{definable} if the set (resp. relation, function) is $\Delta_n$-, $\Sigma_n$-, or $\Pi_n$-definable with (the set defined by) $\psi$ as a parameter.

\section{Probabilities Over Rich Languages: The Gaifman-Snir Framework}

In this section, we introduce the framework of \textcite{G&S1982-GAIPOR}, which marks the first major advance in developing an inductive logic satisfying Putnam's conditions. While the authors make significant headway in understanding the tradeoff between the sophistication of a logical language and the complexity bounds on the priors definable over such a language, they make certain framework decisions in defining probability functions over logical languages which we will show to be in conflict with Putnam's conditions. In this section we introduce the Gaifman and Snir framework and demonstrate how it clashes with conditions (1) and (2). Although the worries we raise are not lost on Gaifman and Snir, we will argue that their proposed solution is not faithful to condition (1).

The underlying idea in \cite{G&S1982-GAIPOR} is to define `empirical predicates' over a base language capable of expressing basic arithmetical statements.\footnote{It is interesting to point out here that Carnap himself suggested using the language of arithmetic as a base language in a very small section of \cite[pp. 62-65]{Carnap1950-CARLFO}.} A crucial element of this setup is that the symbols in $\pazocal{L}_A$ are all interpreted in the same standard way. That is, Gaifman and Snir assume that all formulae of $\pazocal{L}_A$ are interpreted in the standard model $\mathbb{N}$. As we will see shortly, this decision has some unpleasant consequences for probabilities over extensions of $\pazocal{L}_A$. Nevertheless the intuitive idea seems to be this: we should assign probability $1$ to the truths of arithmetic since the focus is really on our credences about the empirical predicates; the arithmetic is just in the background to help express more complex statements about these empirical predicates.

An extension $\pazocal{L}_A^+ \supseteq \pazocal{L}_A$ is given by introducing finitely many `empirical symbols' to the base language. These can come in the form of predicates or functions of any finite arity. Models of $\pazocal{L}_A^+$ all share the same interpretation of the `mathematical' symbols of $\pazocal{L}_A$ but differ in their interpretations of the added empirical symbols. In particular, Gaifman and Snir fix the domain of every model to be the standard natural numbers.
Where $Mod(\pazocal{L}_A^+)$ denotes the set of models with the fixed interpretation of $\pazocal{L}_A$, we write $Mod(\pazocal{L}_A^+) \vDash \varphi$ to assert that $\pazocal{M} \vDash \varphi$ for all $\pazocal{M} \in Mod(\pazocal{L}_A^+)$ and sentences $\varphi \in \pazocal{L}_A^+$. In this way, every model in $Mod(\pazocal{L}_A^+)$ shares the same domain -- namely, $\omega$ -- and hence all empirical predicates are interpreted over the standard numbers.

Probabilities are defined as functions over the \textit{sentences} of $\pazocal{L}_A^+$, denoted $\pazocal{SL}_A^+$, as follows:

\begin{definition}\label{def: Gaifman Snir probability function}
    A \textit{Gaifman-Snir probability function} on $\pazocal{L}^+_A$ is a function $P_\mathbb{N}: \pazocal{SL}^+_A \to [0,1]$ satisfying the following axioms:
    \begin{enumerate}[label=(G\arabic*)]
        \item If $Mod(\pazocal{L}^+_A) \vDash \varphi$ then $P_\mathbb{N}(\varphi)=1$.
        \item If $Mod(\pazocal{L}^+_A) \vDash \neg (\varphi \land \psi)$ then $P_\mathbb{N}(\varphi \lor \psi )=P_\mathbb{N}(\varphi) + P_\mathbb{N}(\psi)$.
        \item $P_\mathbb{N}\big(\exists x \psi(x) \big) = \sup\left\{P_\mathbb{N}\left( \bigvee_{i<n} \psi(\underline{i})\right) \mid n \in \omega \right\}.$
    \end{enumerate}
\end{definition}

\noindent
Informally, (G1) says that probability 1 is given to all logical and arithmetic truths, (G2) assures us that our probability is finitely additive\footnote{Since we cannot formulate infinite disjunctions in our language the debate over finite or infinite additivity is not relevant. Condition (G2) is by no means taking a stance on this issue.}, and (G3), often referred to as `Gaifman's condition',\footnote{This was first introduced in \cite{gaifman1964concerning} and then later referred to as `Gaifman's condition' by \cite{scott1966assigning}.} makes sure that probability functions are uniquely determined by their behavior on quantifier-free sentences. 
On the basis of this definition, one can prove that the probability behaves in an intuitive way on the logical operations:

\begin{proposition}[\cite{Paris2011-PARPIL-3}]
    Let $P_\mathbb{N} : \pazocal{SL}^+_A \to [0,1]$ be a Gaifman-Snir probability function. Then
    \begin{compactitem}
        \item[(1)] $P_\mathbb{N}(\neg \varphi) = 1 - P_\mathbb{N}(\varphi)$
        \item[(2)] If $Mod(\pazocal{L}_A^+) \vDash \neg \varphi$, then $P_\mathbb{N}(\varphi)=0$
        \item[(3)] If $Mod(\pazocal{L}_A^+)\vDash \varphi \to \psi$, then $P_\mathbb{N}(\varphi) \leq P_\mathbb{N}(\psi)$
        \item[(4)] If $Mod(\pazocal{L}_A^+) \vDash \varphi \leftrightarrow \psi$, then $P_\mathbb{N}(\varphi) = P_\mathbb{N}(\psi)$
        \item[(5)] $P_\mathbb{N}(\exists x \psi(x)) = \lim\limits_{n \to \infty} P_\mathbb{N}\big(\bigvee_{i<n} \psi(\underline{i})\big)$
        \item[(6)] $P_\mathbb{N}(\forall x \psi(x)) = \lim\limits_{n \to \infty} P_\mathbb{N}\big(\bigwedge_{i<n} \psi(\underline{i})\big)$ 
    \end{compactitem}
\end{proposition}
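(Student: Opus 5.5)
The plan is to derive (1)--(4) from (G1) and (G2) by the standard argument for finitely additive probabilities on sentences, and then to obtain (5) and (6) from (G3) together with monotonicity.

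For (1), observe that $Mod(\pazocal{L}^+_A) \vDash \neg(\varphi \land \neg\varphi)$ and $Mod(\pazocal{L}^+_A) \vDash \varphi \lor \neg \varphi$. By (G2) we get $P_\mathbb{N}(\varphi\lor\neg\varphi) = P_\mathbb{N}(\varphi) + P_\mathbb{N}(\neg\varphi)$, and by (G1) the left side equals $1$. For (2), if $Mod(\pazocal{L}_A^+)\vDash\neg\varphi$, then (G1) gives $P_\mathbb{N}(\neg\varphi)=1$, so $P_\mathbb{N}(\varphi)=0$ by (1).

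For (3), assume $Mod(\pazocal{L}_A^+)\vDash\varphi\to\psi$. I will write $\psi$ as the disjunction $\varphi\lor(\psi\land\neg\varphi)$ of two sentences that are incompatible in every model. The step needing care is that (G2) speaks about the sentence $\varphi\lor(\psi\land\neg\varphi)$ rather than $\psi$ itself. So I will first establish (4) directly. Suppose $Mod\vDash\varphi\leftrightarrow\psi$. Then $\neg\varphi$ and $\psi$ are jointly inconsistent in every model, and $\neg\varphi\lor\psi$ holds in every model. Applying (G2) and (G1) gives $P_\mathbb{N}(\neg\varphi)+P_\mathbb{N}(\psi)=1$, and (1) then yields $P_\mathbb{N}(\psi)=P_\mathbb{N}(\varphi)$. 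This proves (4) without using (3). Now (3) follows: by (4), $P_\mathbb{N}(\psi)=P_\mathbb{N}(\varphi\lor(\psi\land\neg\varphi))$, which by (G2) equals $P_\mathbb{N}(\varphi)+P_\mathbb{N}(\psi\land\neg\varphi)$, and this is at least $P_\mathbb{N}(\varphi)$ since probabilities are nonnegative.

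For (5), note that $\bigvee_{i<n}\psi(\underline i)$ semantically implies $\bigvee_{i<n+1}\psi(\underline i)$ in every model. By (3) the sequence $P_\mathbb{N}(\bigvee_{i<n}\psi(\underline i))$ is nondecreasing and bounded by $1$, so its limit exists and equals its supremum. (G3) then gives (5).

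For (6), I will use $Mod\vDash\forall x\psi(x)\leftrightarrow\neg\exists x\neg\psi(x)$ and $Mod\vDash\bigwedge_{i<n}\psi(\underline i)\leftrightarrow\neg\bigvee_{i<n}\neg\psi(\underline i)$. Combining (4), (1), and (5) applied to $\neg\psi$ gives
\[P_\mathbb{N}(\forall x\psi(x)) = 1-\lim_n P_\mathbb{N}\Big(\bigvee_{i<n}\neg\psi(\underline i)\Big) = \lim_n P_\mathbb{N}\Big(\bigwedge_{i<n}\psi(\underline i)\Big).\]

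The only real obstacle is the bookkeeping of replacing a sentence by a semantically equivalent one before (G2) can be applied. Proving (4) first, directly from (G1), (G2), and (1), handles this.
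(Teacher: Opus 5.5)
Your argument is correct and complete. The paper gives no proof of this proposition; it defers to Paris, and your derivation is essentially the standard one found there: (G1) with finite additivity for (1)--(4), monotonicity with (G3) for (5), and duality for (6).

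The only difference is the order of (3) and (4). The usual route gets (3) in one step. Since $\varphi$ and $\neg\psi$ are incompatible in every model, $1 \geq P_\mathbb{N}(\varphi \lor \neg\psi) = P_\mathbb{N}(\varphi) + 1 - P_\mathbb{N}(\psi)$. It then obtains (4) by applying (3) in both directions. That avoids the substitution step you handle by proving (4) first.
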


While statements (5) and (6) reinforce the role Gaifman's condition plays in assuring that the probabilities are defined uniquely by their behavior on the quantifier-free sentences, it should be stressed that this state of affairs is only made possible because the set of individual constants $\underline{1}, \underline{2}, \underline{3}, \ldots$ in $\pazocal{L}_A^+$ exhausts the domain of every model.

We now turn to bringing out some concerning aspects of the Gaifman-Snir framework. Recall that we formalize the idea of an ``accessible prior'' in terms of arithmetical complexity. So, for a prior to be in any way accessible, it should be arithmetically definable. But this is a fairly serious problem with the Gaifman and Snir framework: Gaifman-Snir probability functions are not arithmetically definable.

\begin{theorem}\label{thm:No Definable GS}
    There are no arithmetically definable Gaifman-Snir probability functions on $\pazocal{L}^+_A$.
\end{theorem}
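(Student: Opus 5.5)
The strategy is a reduction to Tarski's undefinability theorem: from an arithmetically definable Gaifman-Snir probability function we will define truth in $\mathbb{N}$ for all of $\pazocal{SL}_A$, which is impossible.

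\textbf{Step 1.} Every Gaifman-Snir probability function $P_\mathbb{N}$ is two-valued on pure arithmetic sentences. Let $\varphi \in \pazocal{SL}_A \subseteq \pazocal{SL}^+_A$. Every model in $Mod(\pazocal{L}^+_A)$ interprets the arithmetic symbols as in $\mathbb{N}$ over the domain $\omega$, so its $\pazocal{L}_A$-reduct is exactly $\mathbb{N}$. Hence if $\mathbb{N}\vDash\varphi$ then $Mod(\pazocal{L}^+_A)\vDash\varphi$, and (G1) gives $P_\mathbb{N}(\varphi)=1$. If instead $\mathbb{N}\nvDash\varphi$, then $Mod(\pazocal{L}^+_A)\vDash\neg\varphi$, and part (2) of the Proposition above gives $P_\mathbb{N}(\varphi)=0$. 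Thus for all $\varphi\in\pazocal{SL}_A$,
\[ \mathbb{N}\vDash\varphi \iff P_\mathbb{N}(\varphi)=1 \iff P_\mathbb{N}(\varphi)>1/2. \]
This step uses only (G1) and (G2). (G3) enters only implicitly, through the fixed standard domain.

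\textbf{Step 2.} From a definition of $P_\mathbb{N}$ we obtain a truth definition. Suppose $P_\mathbb{N}$ is defined by an $\pazocal{L}_A$-formula $\theta(x,n,y)$ for a rational approximation $g$, so that $|P_\mathbb{N}(\varphi)-g(\ulcorner\varphi\urcorner,n)|\le 2^{-n}$. Put
\[ T(x) := \exists y\,\bigl(\theta(x,\underline{2},y)\land y>1/2\bigr), \]
where the comparison is carried out with the arithmetized rationals. If $P_\mathbb{N}(\varphi)=1$, then $g(\ulcorner\varphi\urcorner,2)\ge 3/4$. If $P_\mathbb{N}(\varphi)=0$, then $g(\ulcorner\varphi\urcorner,2)\le 1/4$. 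So for every $\varphi\in\pazocal{SL}_A$ we get $\mathbb{N}\vDash T(\ulcorner\varphi\urcorner)\leftrightarrow\varphi$.

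\textbf{Step 3.} This contradicts Tarski's theorem. Concretely, the diagonal lemma yields $\lambda$ with $\mathbb{N}\vDash\lambda\leftrightarrow\neg T(\ulcorner\lambda\urcorner)$. Alternatively, $T$ has some fixed complexity $\Sigma_k$, whereas truth for $\Sigma_{k+1}$ sentences is not $\Sigma_k$-definable.

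\textbf{Main obstacle.} The delicate point is bookkeeping rather than mathematics. Sentences of $\pazocal{L}_A$ must count as sentences of $\pazocal{L}^+_A$, so the Gödel numbering has to be extended compatibly. We also need the definition of a real-valued function via approximations, which the paper states as provable in $\mathsf{PA}$, to be genuinely correct in $\mathbb{N}$; since $\mathbb{N}\vDash\mathsf{PA}$, this holds. If the definition is allowed to use an empirical parameter, the claim concerns $\pazocal{L}_A$-definability only, which is the intended reading.
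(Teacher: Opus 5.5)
Your proposal is correct and follows the paper's proof. Both arguments use that a Gaifman-Snir function is $\{0,1\}$-valued on $\pazocal{SL}_A$ and agrees with truth in $\mathbb{N}$, so an arithmetical definition of $P_\mathbb{N}$ yields a truth definition that contradicts Tarski's theorem. The differences are cosmetic. You justify $\mathbb{N}\vDash\varphi \iff P_\mathbb{N}(\varphi)=1$ explicitly via (G1) and part (2) of the Proposition, where the paper only asserts it. You also read off truth from the single value $g(\ulcorner\varphi\urcorner,2)$ with threshold $1/2$, instead of the paper's condition $\forall n\,(|1-g(\ulcorner\varphi\urcorner,n)|\leq 2^{-n})$.
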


\noindent
The proof of this theorem relies on Tarski's ``undefinability of truth'' theorem, which we state here for completeness.

\begin{theorem}[Tarski's Undefinability of Truth]\label{thm:Tarski}
    Let $\pazocal{M} \vDash \mathsf{PA}$. There is no $\pazocal{L}_A$-formula $\theta(x)$ such that for all $\pazocal{L}_A$-sentences $\varphi$,
    \[\pazocal{M} \vDash \theta(\ulcorner\varphi\urcorner) \; \text{ iff } \; \pazocal{M} \vDash \varphi.\]
\end{theorem}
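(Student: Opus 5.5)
The statement to be proved here is Tarski's theorem itself, in the form: for every $\pazocal{M} \vDash \mathsf{PA}$ there is no formula $\theta(x)$ defining truth in $\pazocal{M}$ for $\pazocal{L}_A$-sentences. I would argue by contradiction via the diagonal lemma.

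The first step is to recall the diagonal (fixed point) lemma, provable in $\mathsf{PA}$ (indeed in much weaker theories). It says that for every $\pazocal{L}_A$-formula $\chi(x)$ with one free variable there is a sentence $\delta$ such that
\[\mathsf{PA} \vdash \delta \leftrightarrow \chi(\underline{\ulcorner \delta \urcorner}).\]
Its proof uses the representability in $\mathsf{PA}$ of the computable substitution function $\mathrm{sub}(\ulcorner \varphi(x) \urcorner, n) = \ulcorner \varphi(\underline{n}) \urcorner$. Let $\sigma(u,v,w)$ represent it, and consider
\[\beta(x) := \exists y\,\big(\sigma(x,x,y) \land \chi(y)\big).\]
Then take $\delta := \beta(\underline{\ulcorner \beta \urcorner})$. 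Representability gives $\mathsf{PA} \vdash \forall y\,\big(\sigma(\underline{\ulcorner\beta\urcorner},\underline{\ulcorner\beta\urcorner},y) \leftrightarrow y = \underline{\ulcorner\delta\urcorner}\big)$, from which the biconditional follows.

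With the lemma in hand, suppose $\theta(x)$ defines truth in $\pazocal{M}$. Apply the diagonal lemma to $\chi(x) := \neg\theta(x)$ to obtain a sentence $\lambda$ with
\[\mathsf{PA} \vdash \lambda \leftrightarrow \neg\theta(\underline{\ulcorner\lambda\urcorner}).\]
Since $\pazocal{M} \vDash \mathsf{PA}$, we get $\pazocal{M} \vDash \lambda$ iff $\pazocal{M} \vDash \neg\theta(\underline{\ulcorner\lambda\urcorner})$. The hypothesis on $\theta$ gives $\pazocal{M} \vDash \theta(\underline{\ulcorner\lambda\urcorner})$ iff $\pazocal{M} \vDash \lambda$. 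Combining the two, $\pazocal{M} \vDash \lambda$ iff $\pazocal{M} \nvDash \lambda$, which is a contradiction. One subtlety is that the statement writes $\theta(\ulcorner\varphi\urcorner)$; in a possibly nonstandard $\pazocal{M}$ this must be read as $\theta$ applied to the numeral $\underline{\ulcorner\varphi\urcorner}$, which names a standard element. That is exactly the form the diagonal lemma delivers.

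The main obstacle is the diagonal lemma itself, specifically the representability of $\mathrm{sub}$ as a provably total function in $\mathsf{PA}$. This rests on the standard fact that every computable (primitive recursive) function is representable in $\mathsf{PA}$, via Gödel's $\beta$-function coding of sequences. I would cite this as standard rather than reprove it. Everything after that point is the short liar-style argument above.
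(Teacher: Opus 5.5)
The paper gives no proof of this theorem; it quotes Tarski's theorem as a standard result ``for completeness'' and only applies it with $\pazocal{M}=\mathbb{N}$, in the proof of Theorem~\ref{thm:No Definable GS} and its corollary. Your argument is the standard proof and is correct as written: the diagonal lemma via a strongly representing formula for the substitution function, then the liar sentence $\lambda \leftrightarrow \neg\theta(\underline{\ulcorner\lambda\urcorner})$, then soundness of $\mathsf{PA}$ in $\pazocal{M}$. You are also right that for nonstandard $\pazocal{M}$ the expression $\theta(\ulcorner\varphi\urcorner)$ must be read with the numeral $\underline{\ulcorner\varphi\urcorner}$.

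Your argument also relies on $\theta$ having no parameters from $\pazocal{M}$. With a parameter $a$, diagonalizing $\neg\theta(x,z)$ only yields $\lambda(a)$, which is not an $\pazocal{L}_A$-sentence. This hypothesis cannot be dropped for nonstandard models: a recursively saturated $\pazocal{M}\vDash\mathsf{PA}$ does define its own truth set for standard sentences from a suitable parameter.
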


Since the Gaifman-Snir framework fixes the interpretation of formulae of $\pazocal{L}_A$ in terms of the standard model $\mathbb{N}$, the condition (G1) forces any arithmetically definable probability function on $\pazocal{L}_A^+$ to violate Tarski's theorem. This is the idea behind the proof of Theorem \ref{thm:No Definable GS}.

\begin{proof}[Proof of Theorem \ref{thm:No Definable GS}]
    Suppose towards a contradiction that there is an arithmetically definable Gaifman-Snir probability $P_\mathbb{N}$ over $\pazocal{L}_A^+$. In particular, let $g_{P_\mathbb{N}}: \omega \times \omega \to \mathbb{Q}$ be the arithmetically definable function that approximates $\ulcorner P_\mathbb{N} \urcorner$ where $\ulcorner P_\mathbb{N}\urcorner : \omega \to \omega$ defined by $\ulcorner P_\mathbb{N}\urcorner (\ulcorner \varphi\urcorner) = P_\mathbb{N}(\varphi)$ for all $\varphi \in \pazocal{SL}_A^+$. Let $\text{Sent}(x)$ be the formula expressing that $x$ is the Gödel code for a sentence in the language of arithmetic. Then the set
    \[T_\mathbb{N}:=\{\ulcorner \varphi \urcorner \mid \text{Sent}(\ulcorner \varphi \urcorner) \wedge \forall n(|1 - g_{P_\mathbb{N}}(\ulcorner\varphi \urcorner, n)|\leq 2^{-n}) \}\]
    is arithmetically definable. But for all $\varphi \in \pazocal{SL}_A$, $\mathbb{N} \vDash \varphi$ iff $P_\mathbb{N}(\varphi)=1$, and so 
    \[T_\mathbb{N} = \{\ulcorner \varphi \urcorner \mid \varphi \in \pazocal{SL}_A \wedge \mathbb{N} \vDash \varphi\},\]
    which is not arithmetically definable by Tarski's theorem (\ref{thm:Tarski}). Contradiction.
\end{proof}

This feature of their system is not lost on Gaifman and Snir. Indeed they essentially state theorem 1 without proof.\footnote{See \cite[p. 503]{G&S1982-GAIPOR}.} However, they argue that this problem can be avoided. Their fix is to modify the definition of a definable probability by restricting it to the empirical portion of the language. Because the domain and interpretations of the non-logical constants of $\pazocal{L}_A$ are fixed across models in $Mod(\pazocal{L}_A^+)$, we can further refine the class of statements over which probabilities on $\pazocal{L}_A^+$ are uniquely identified.

\begin{definition}
    Let $t_i$ be a term of $\pazocal{L}_A$ for all $i \in \omega$. We define the class of \textit{elementary empirical formulas} by recursion as follows:
    \begin{compactitem}
        \item[-] For all empirical predicate symbols $P \in \pazocal{L}_A^+ \setminus \pazocal{L}_A$, $P(t_1, \ldots, t_n)$ is an elementary empirical formula.
        \item[-] For all empirical function symbols $f \in \pazocal{L}_A^+ \setminus \pazocal{L}_A$, $f(t_1, \ldots, t_n)$ is an elementary empirical formula.
        \item[-] If $\varphi$ is an elementary empirical formula, then so is $\neg \varphi$.
        \item[-] If $\varphi$ and $\psi$ are elementary empirical formulas, then so is $\varphi \to \psi$. 
        \end{compactitem}
\end{definition}

\noindent
Since the connectives $\wedge$ and $\vee$ can be defined in terms of $\neg$ and $\to$, then it follows that the elementary empirical formulas are precisely the sentential combinations of the atomic empirical formulas. The elementary empirical \textit{sentences} can then be defined as the class of elementary empirical formulas containing no free variables. Using this definition, Gaifman and Snir show the following:

\begin{proposition}[\cite{G&S1982-GAIPOR}]
    Every Gaifman-Snir probability function on $\pazocal{L}_A^+$ is determined uniquely by its values on the elementary empirical sentences.
\end{proposition}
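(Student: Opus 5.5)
The plan is to prove uniqueness: if two Gaifman-Snir probability functions $P$ and $P'$ agree on all elementary empirical sentences, then they agree on every sentence of $\pazocal{SL}_A^+$. We first reduce to quantifier-free sentences, and then reduce quantifier-free sentences to elementary empirical ones.

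\emph{Step 1: reduction to quantifier-free sentences.} Proceed by induction on the number of quantifiers in a prenex normal form. By item (4) of the Proposition from \cite{Paris2011-PARPIL-3}, logically equivalent sentences get equal probability, so we may assume every sentence is in prenex form. The case $\forall x$ reduces to $\exists x$ via negation and item (1). For $\exists x\,\psi(x)$, condition (G3) expresses $P(\exists x\,\psi(x))$ as a supremum of values $P(\bigvee_{i<n}\psi(\underline{i}))$. Each of these sentences has one fewer quantifier. Hence, by the induction hypothesis, $P$ and $P'$ agree on all of them, and so they agree on their supremum. Here it matters that every model in $Mod(\pazocal{L}_A^+)$ has domain $\omega$, so that the numerals exhaust the domain.

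\emph{Step 2: reduction to elementary empirical sentences.} Let $\varphi$ be a quantifier-free sentence of $\pazocal{L}_A^+$. Its atomic subformulas are of two kinds.
\begin{itemize}
\item Purely arithmetical atoms, built from closed $\pazocal{L}_A$-terms. Each has a fixed truth value in every model in $Mod(\pazocal{L}_A^+)$, because the arithmetic part is interpreted standardly. We replace each such atom by $\top$ or $\bot$, which yields a sentence $Mod$-equivalent to $\varphi$.
\item Atoms containing empirical function symbols, possibly nested and possibly inside arithmetical relations. For instance, $f(\underline{2}) + \underline{1} = \underline{5}$ is equivalent in all these models to $f(\underline{2}) = \underline{4}$. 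If a term such as $f(f(\underline{1}))$ occurs, we case-split on the value of the inner term. This converts the atom into an infinite disjunction $\bigvee_k (f(\underline{1}) = \underline{k} \wedge f(\underline{k}) = \dots)$, which is not a sentence. To handle this, we express the case split with an existential quantifier: $\exists y\,(f(\underline{1}) = y \wedge \chi(y))$. We then apply (G3) again, which approximates the value by the finite disjunctions over $y = \underline{k}$ for $k<n$. Each finite disjunct is a Boolean combination of elementary empirical atoms and arithmetical atoms, and the arithmetical atoms are eliminated as in the first case.
\end{itemize}
Item (4) then transfers equality of probabilities along these $Mod$-equivalences.

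To organize Step 2, we run an induction on the nesting depth of empirical function symbols inside terms, combined with the quantifier induction from Step 1. Nesting depth is measured as the total number of occurrences of empirical function symbols applied to non-numeral arguments. Each unnesting step introduces one existential quantifier and strictly lowers this measure. Applying (G3) then removes the quantifier again, while the numerals substituted for $y$ keep the measure lowered.

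The main obstacle is precisely this unnesting, and in particular choosing a well-founded measure that handles quantifiers and nested empirical terms simultaneously. I would use the lexicographic pair (nesting measure, number of quantifiers). One must check that substituting numerals never increases the nesting measure, and that pushing arithmetical relations down onto atoms of the form $f(\underline{k})=\underline{m}$ uses only $Mod$-valid equivalences. For example, $t_1 = t_2$ with $t_1,t_2$ empirical terms becomes $\exists y\,(t_1 = y \wedge t_2 = y)$.
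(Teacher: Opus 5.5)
The ingredients you use are the right ones: item (4) to move along $Mod$-equivalences, unnesting empirical terms with an existential quantifier, and (G3) to trade an existential for finite disjunctions of numeral instances. The paper itself gives no proof; it only cites Gaifman and Snir. But the induction meant to tie these together fails, at exactly the point you flag as the main obstacle.

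(G3) gives you $P(\bigvee_{i<n}\psi(\underline{i}))$. Since $P$ of a disjunction is not determined by $P$ of its disjuncts, your induction hypothesis has to apply to the whole disjunction. That disjunction contains $n$ copies of everything in $\psi$, so any measure that \emph{counts} occurrences goes up, not down.

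In Step 1, $\exists x\forall y\,\theta(x,y)$ has two quantifiers. The sentence $\forall y\,\theta(\underline{0},y)\vee\forall y\,\theta(\underline{1},y)$ prenexes to $\forall y\forall y'\,(\theta(\underline{0},y)\vee\theta(\underline{1},y'))$, again two. So ``one fewer quantifier'' is false, and for longer universal blocks the count grows with $n$.

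In Step 2, $f(f(\underline{1}))=f(f(\underline{2}))$ has measure $2$. Unnesting $f(\underline{1})$ and applying (G3) yields $\bigvee_{k<n}\bigl(f(\underline{1})=\underline{k}\wedge f(\underline{k})=f(f(\underline{2}))\bigr)$, whose measure is $n$. Worse, $f(\underline{1})=f(\underline{2})$ sits at the bottom $(0,0)$ of your order. Yet it is the infinite disjunction $\bigvee_k(f(\underline{1})=\underline{k}\wedge f(\underline{2})=\underline{k})$ and is not $Mod$-equivalent to any elementary empirical sentence. Your rewriting $\exists y\,(f(\underline{1})=y\wedge f(\underline{2})=y)$ moves it \emph{up} to $(0,1)$. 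So your lexicographic order neither decreases along the reductions nor has a correct base case.

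The repair is to decouple the two steps.
\begin{enumerate}
\item First unnest everything at once, using $\chi(e)\leftrightarrow\exists y\,(e=y\wedge\chi(y))$ for each empirical subterm $e$, innermost first. Every sentence is then $Mod$-equivalent to a prenex sentence whose matrix is \emph{flat}: empirical symbols occur only in atoms $R(\bar s)$ and $f(\bar s)=s'$, with $\bar s,s'$ variables or $\pazocal{L}_A$-terms.
\item Closed instances of a flat quantifier-free formula are Boolean combinations of elementary empirical sentences and closed arithmetical atoms. The arithmetical atoms are $\top$ or $\bot$ throughout $Mod(\pazocal{L}_A^+)$, so the base case holds.
\item Then induct on the number $k$ of quantifier blocks. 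The hypothesis is that $P$ and $P'$ agree on all flat $\Sigma_k$ and $\Pi_k$ sentences. This class is closed, up to equivalence, under finite disjunctions, which is what (G3) requires.
\item Handle a block $\exists x_1\cdots\exists x_m$ by a sub-induction on $m$, via $\bigvee_{i<n}\exists\bar z\,\theta(\underline{i},\bar z)\leftrightarrow\exists\bar z\bigvee_{i<n}\theta(\underline{i},\bar z)$. Reduce $\forall$ to $\exists$ by negation.
\end{enumerate}

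Alternatively, you can pass to the countably additive measure $\mu_P$. In this fixed-domain setting that measure needs (G3), not just (G1)--(G2). Every $\llbracket\varphi\rrbracket$ lies in the $\sigma$-algebra generated by the elementary sets, since $\llbracket\exists x\,\psi\rrbracket=\bigcup_k\llbracket\psi(\underline{k})\rrbracket$. The $\pi$--$\lambda$ uniqueness theorem then does the bookkeeping for you. It works because a $\sigma$-algebra is closed under Boolean operations, unlike the class $\{\varphi: P(\varphi)=P'(\varphi)\}$.
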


\noindent
Gaifman and Snir then take this proposition to warrant the following definition:

\begin{definition}
    A Gaifman-Snir probability function $P$ over $\pazocal{L}_A^+$ is \textit{definable} if its restriction to elementary empirical sentences is arithmetically definable.
\end{definition}

\noindent
The idea is that since probabilities are uniquely determined by their values on elementary sentences, this definition suggests that it is enough for our priors to be definable if they are definable on this restricted class.

While Gaifman and Snir's suggestion circumvents the problem raised by theorem 1, it seems to get rid of the need for having the arithmetic language around in the first place! Following this definition implies that our priors are only defined on the elementary empirical sentences. Of course, by definition, these don't contain any symbols from the language of arithmetic other than the numerals. And again, Theorem 1 assures us that there is no way to extend such a prior to all the sentences of $\pazocal{L}_A^+$ while preserving its definability.\footnote{It is well-known that $\mathsf{PA}$ can define a truth-predicate for the class of $\Sigma_n$-sentences for any $n \in \omega$ (see \cite{kaye1991models} section 9.3). In this way, one could extend the Gaifman and Snir notion of definability to a particular complexity class of arithmetic sentences and demonstrate the existence of priors of (at least) $\Sigma_{n+2}$ complexity. While this would admit arithmetically definable priors over \textit{fragments} of an arithmetic language, computable priors would only be definable over the bounded fragment of $\pazocal{L}_A$. This draconian restriction flies in the face of maximizing the complexity tradeoff and hence we do not see this as a live option.} Definable priors in the sense of definition 3 then can only represent our credences over non-arithmetic sentences.\footnote{These priors are essentially only defined over the weak logical language $\pazocal{L}_\infty$ of \cite{Carnap1950-CARLFO}.} This is rather unsatisfying since our original motivation was to increase the expressivity of our language so as to have priors defined over more complex sentences.

In summary, we examined Gaifman and Snir's framework with an eye towards satisfying Putnam's conditions and learned the negative lesson that (G1), (G2), and (G3) together yield an impossibility result for arithmetically definable probability functions. More optimistically, an impossibility result clearly lays out the options for how to move forward. The remainder of this paper will explore some options for arithmetically definable probability functions. 

Prima facie, the options seem to be to relax (G1), relax (G2), or relax (G3). However, this assumes that the conditions are independent so that relaxing any one of them is a distinct option. It will turn out that this is not the case here. In the next section, we will show that the Gaifman condition, (G3), is actually entailed by a fixed domain assumption we made when setting up (G1) and (G2) at the beginning. This shows that we must reformulate (G1) and (G2) without reference to a fixed domain to make them independent of (G3). This will leave us with a clearer set of options for how to proceed.

\section{Restrictive Assumptions}

In this section we explore the relationship between the Gaifman condition and an assumption already implicit in (G1) and (G2), the fixed domain assumption. By ``fixed domain assumption'', we mean the framework decision to let the model space used to define probability functions contain only models sharing a common domain. While Gaifman and Snir fix the standard numbers as the common domain underlying their model space, \cite{Paris2011-PARPIL-3} more generally assume a countable set of constants which they take to be the common domain. In what follows, we formalize the notion of a domain assumption as the condition that the domain of every model in the model space used to define probability functions consists only of a countable set of constants. Since the standard numbers and the numerals stand in a one to one correspondence, there is no formal difference in treating $\mathbb{N}$ as the structure $\langle \underline{\omega}, \underline{0}, S. +, \times \rangle$ where $\underline{\omega}$ is the set of numerals and we understand every numeral to be interpreted as itself.

Recall that (G1) and (G2) were defined in terms of the logical satisfaction relation between the set of models $Mod(\pazocal{L}_A^+)$ and elements of $\pazocal{SL}_A^+$. The fixed domain assumptions are therefore present in our definition of probability functions through our definition of the space $Mod(\pazocal{L}_A^+)$. In this section, we demonstrate that these assumptions alone are sufficient to guarantee the Gaifman condition, i.e., (G1) and (G2) imply (G3). Then we will show that any probability function over $\mathsf{PA}^+$ (with or without fixed domain assumptions) satisfying (G3) cannot be arithmetically definable. Since the domain assumption implies (G3), this also serves to show that the fixed domain assumption alone is sufficient to block arithmetically definable priors.

\subsection{Domain Assumptions}

Before we show how the fixed domain assumption leads to difficulties with arithmetical definability, we would like to begin more charitably by first recognizing how reasonable of an assumption it is. The fixed domain assumption is an extremely natural one to make when setting up an inductive logic. 

The choices we make in setting up an inductive logic are guided by how we intend to apply it. Many proposals in inductive logic have been guided by the intuition that the inductive logic is meant to apply to a countable sequence of individuals, each of which satisfies or does not satisfy some qualitative predicates or relations. This intuitive image motivates formalizing observations and predictions as formulae without quantifiers. A paradigm example of an evidential statement on this view is that the fiftieth observed individual is a white swan, formalized perhaps as $S(c_{50})\land W(c_{50})$. Evidential statements that require quantification, such as a statement about the observed rate of acceleration between two celestial bodies at a particular time, are suppressed by this picture. 

The purpose of a universally quantified sentence is to talk about all individuals. So, if we understand all individuals as all individuals of a countable sequence, it is very obvious why we should make the fixed domain assumption. To say that all swans are white is to say that the first swan is white, the second swan is white, and so on. Historically, taking the context of application to be a countable sequence originates in the frequentist interpretation of probability. But there are independent reasons to like this view. Even if we are open to the possibility that there are uncountably many individuals, we may still think of those that we observe as forming a countable sequence and require that the probabilities of all our sentences, including the quantified ones, should be settled at one or zero in the limit of these observations. This can be motivated in an empiricist, instrumental, or scientifically practicalist spirit. In an empiricist spirit, we may think that as a matter of semantics there should be no uncertainty once all of the observations in the countable sequence are in. In an instrumental spirit, we may be open to the existence of uncountably many individuals but insist that the purpose of inductive logic is only to help us reason about the countably many things we will actually come across. In a scientifically practicalist spirit, we may point out that it is unclear how to reason about the limit properties of statistical procedures when we do not help ourselves to definitions of limits in terms of countable sequences.

\subsection{Arithmetic Definability and the Gaifman Condition}

The fixed domain assumptions provide a natural motivation for the Gaifman condition as a way of characterizing the behavior of a probability function on quantified sentences. Since the domain is exhausted by the interpretation of the constants, the probability of an existentially quantified sentence $\exists x \psi(x)$ should be somehow determined by the instantiations $\psi(c_i)$ for $i \in \omega$. Because our language allows only finite sentences while there are infinitely many constants, taking the supremum of a sequence of longer disjunctions, as is done in the Gaifman condition, is a very sensible way of extending probabilities to quantified sentences. In fact, as the following proposition demonstrates, the Gaifman condition is not only well-motivated by these domain assumptions, but is logically entailed by them.

\begin{proposition}\label{prop:G1 and G2 imply G3}
    Let $\pazocal{L}$ be a language with countably many constants $c_i$ for all $i \in \omega$, and finitely many predicate and function symbols. Let $Mod(\pazocal{L})$ be the set of $\pazocal{L}$-structures with domain $\{c_i \mid i \in \omega\}$. Then any function $P: \pazocal{SL} \to [0,1]$ satisfying conditions (G1) and (G2) with respect to $Mod(\pazocal{L}) \vDash$ satisfies the Gaifman condition (G3).
\end{proposition}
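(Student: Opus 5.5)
The plan is to prove the two inequalities in (G3) separately. The first is routine and uses only (G1) and (G2). The second is where the fixed domain $\{c_i \mid i\in\omega\}$ has to do real work.

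\emph{Step 1.} First I will recover from (G1) and (G2), with respect to $Mod(\pazocal{L})\vDash$, the elementary facts listed in the Proposition quoted from \cite{Paris2011-PARPIL-3}, specifically items (1)--(4). The argument is the usual one. Since $\varphi\vee\neg\varphi$ is valid in every model and $\neg(\varphi\wedge\neg\varphi)$ is too, (G1) and (G2) give $P(\neg\varphi)=1-P(\varphi)$. If $Mod(\pazocal{L})\vDash\varphi\to\psi$, then writing $\psi$ as the disjoint disjunction $\varphi\vee(\psi\wedge\neg\varphi)$ up to $Mod$-equivalence yields monotonicity. Applying monotonicity in both directions gives invariance under $Mod$-equivalence.

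\emph{Step 2.} The easy half of (G3) now follows. Each $\bigvee_{i<n}\psi(c_i)$ entails $\exists x\,\psi(x)$ in every structure, so $P(\bigvee_{i<n}\psi(c_i))\le P(\exists x\,\psi(x))$ for all $n$. These values are nondecreasing in $n$, so their supremum exists and is at most $P(\exists x\,\psi(x))$.

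\emph{Step 3, the main obstacle.} It remains to show $P(\exists x\,\psi(x))\le\sup_n P(\bigvee_{i<n}\psi(c_i))$. Finite additivity cannot give this by itself, so the fixed domain must be used in an essential way. I will attack it topologically.

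\begin{itemize}
  \item Let $\mathcal{B}$ be the Lindenbaum algebra of $\pazocal{SL}$ modulo $Mod(\pazocal{L})$-equivalence. By Step 1, $P$ is a well-defined finitely additive probability on $\mathcal{B}$.
  \item Pass to the Stone space $S(\mathcal{B})$, which is compact, and in which each sentence names a clopen set $[\varphi]$.
  \item Each model $M\in Mod(\pazocal{L})$ determines an ultrafilter $\{\varphi \mid M\vDash\varphi\}$. Because every element of $M$ is some $c_i$, we have $M\vDash\exists x\,\psi(x)$ iff $M\vDash\psi(c_i)$ for some $i$.
\end{itemize}

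The key claim I will try to establish is that, in $S(\mathcal{B})$, the clopen set $[\exists x\,\psi(x)]$ is the union of the increasing clopen sets $[\bigvee_{i<n}\psi(c_i)]$. Granting this, compactness of the clopen set $[\exists x\,\psi(x)]$ forces the union to stabilise at some finite $n_0$. Then $Mod(\pazocal{L})\vDash\exists x\,\psi(x)\leftrightarrow\bigvee_{i<n_0}\psi(c_i)$, and Step 1(4) gives equality, hence (G3).

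To obtain the key claim I need to rule out ultrafilters that contain $\exists x\,\psi(x)$ together with every $\neg\psi(c_i)$. My first attempt will be to show that every ultrafilter of $\mathcal{B}$ is realised by some $M\in Mod(\pazocal{L})$, which amounts to an $\omega$-completeness property of the fixed-domain semantics. For this I would build the term model from the ultrafilter, with domain the constants.

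I expect this step to be the real difficulty. A non-principal-type ultrafilter of the kind just described is consistent with every finite part of the fixed-domain theory, so realising it by a model in $Mod(\pazocal{L})$ is exactly what fails for general finitely additive $P$. If that fails, the fallback is to isolate precisely which additional property of $P$ the proof needs here.
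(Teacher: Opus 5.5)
Your Steps 1 and 2 are fine, but the key claim of Step 3 is false, and your own compactness argument shows why. If $[\exists x\,\psi(x)]$ were the union of the increasing clopen sets $[\bigvee_{i<n}\psi(c_i)]$, you would conclude $Mod(\pazocal{L})\vDash\exists x\,\psi(x)\leftrightarrow\bigvee_{i<n_0}\psi(c_i)$ for some $n_0$. That already fails for $\psi(x)=R(x)$ with $R$ unary: interpret $R$ as $\{c_{n_0}\}$. Concretely, each sentence $\sigma_n:=\exists x\,R(x)\wedge\bigwedge_{i<n}\neg R(c_i)$ holds in the structure of $Mod(\pazocal{L})$ with $R=\{c_n\}$. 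So the $\llbracket\sigma_n\rrbracket$ form a decreasing chain of nonzero elements of $\mathcal{B}$ and lie in some ultrafilter $U$ --- exactly the kind of ultrafilter you hoped to exclude. Fixed-domain semantics is not compact ($\bigcap_n\llbracket\sigma_n\rrbracket=\emptyset$), so no term-model construction can realise every ultrafilter.

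This is a defect of the statement, not of your route. Put $P(\varphi)=1$ if $\llbracket\varphi\rrbracket\in U$ and $P(\varphi)=0$ otherwise. Because $U$ is an ultrafilter, $P$ satisfies (G1) and (G2) relative to $Mod(\pazocal{L})$: $U$ contains the top element, and a union of two disjoint sets lies in $U$ iff exactly one of them does. Yet $P(\exists x\,R(x))=1$ while $P(\bigvee_{i<n}R(c_i))=0$ for every $n$, since that disjunction is incompatible with $\sigma_n\in U$. So (G3) fails.

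The paper never meets your obstacle only because it invokes Lemma~\ref{lem:Measure on Mod Space} (that (G1) and (G2) yield a countably additive $\mu_P$ on $\sigma(\pazocal{B})$) and then uses continuity from below. The same $P$ refutes that lemma in the fixed-domain setting: $\llbracket\exists x\,R(x)\rrbracket=\bigcup_i\llbracket R(c_i)\rrbracket$ would need measure $1$ while every piece has measure $0$. The compactness argument that gives continuity at $\emptyset$ in Lemma~\ref{lem:DC Measure on Mod Space} is unavailable here. Moreover, the Gaifman--Snir representation concerns functions that already satisfy (G3), by Definition~\ref{def: Gaifman Snir probability function}. So the paper's proof is circular exactly where you got stuck.

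Your fallback is the right diagnosis. The missing hypothesis is $\sigma$-additivity of $P$ on $\{\llbracket\varphi\rrbracket\}$, which with a fixed domain is essentially (G3) itself. The proposition can therefore only be rescued by assuming its conclusion.
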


\noindent
We first introduce a bit of machinery for this proof.
Let $\pazocal{B} :=\{\llbracket \varphi \rrbracket \mid \varphi \in \pazocal{SL}\}$ where 
\[\llbracket \varphi \rrbracket := \{ \pazocal{M} \in Mod(\pazocal{L}) \mid \pazocal{M} \vDash \varphi\}.\] 
To prove the above proposition, we will exploit a relationship between probability functions $P$ over sentences of our language and probability measures $\mu$ on the least $\sigma$-algebra containing $\pazocal{B}$, denoted $\sigma(\pazocal{B})$.

\begin{lemma}[\cite{G&S1982-GAIPOR}]\label{lem:Measure on Mod Space}
    Let $P: \pazocal{SL} \to [0,1]$ be a function satisfying (G1) and (G2). Then there is a countably additive measure $\mu_{P}$ on $\sigma(\pazocal{B})$ such that for all $\varphi \in \pazocal{SL}$, $P(\varphi) = \mu_{P}(\llbracket \varphi \rrbracket)$.
\end{lemma}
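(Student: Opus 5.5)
The plan is to build $\mu_P$ in two stages. First, define a finitely additive set function on the algebra $\pazocal{B}$. Then extend it to $\sigma(\pazocal{B})$ by the Carath\'eodory extension theorem.

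\emph{Stage one.} Observe that $\pazocal{B}$ is a Boolean algebra of subsets of $Mod(\pazocal{L})$, because $\llbracket \neg\varphi\rrbracket = Mod(\pazocal{L})\setminus\llbracket\varphi\rrbracket$ and $\llbracket\varphi\lor\psi\rrbracket=\llbracket\varphi\rrbracket\cup\llbracket\psi\rrbracket$. Set $\mu_0(\llbracket\varphi\rrbracket):=P(\varphi)$.
\begin{itemize}
\item Well-definedness: if $\llbracket\varphi\rrbracket=\llbracket\psi\rrbracket$, then $Mod(\pazocal{L})\vDash\varphi\leftrightarrow\psi$, so $P(\varphi)=P(\psi)$ by item (4) of the Proposition of \cite{Paris2011-PARPIL-3}. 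That item uses only (G1) and (G2): apply (G2) to the disjoint pairs $\varphi\lor\neg\varphi$ and $\varphi\lor(\psi\land\neg\varphi)$.
\item Normalisation: $\mu_0(Mod(\pazocal{L}))=1$ by (G1).
\item Finite additivity: disjoint $\llbracket\varphi\rrbracket$ and $\llbracket\psi\rrbracket$ mean $Mod(\pazocal{L})\vDash\neg(\varphi\land\psi)$, so (G2) gives additivity.
\end{itemize}
So $\mu_0$ is a finitely additive probability on $\pazocal{B}$.

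\emph{Stage two.} Carath\'eodory's theorem extends $\mu_0$ uniquely to a countably additive $\mu_P$ on $\sigma(\pazocal{B})$, provided $\mu_0$ is countably additive on $\pazocal{B}$. It suffices to prove continuity at $\emptyset$: if $\llbracket\varphi_0\rrbracket\supseteq\llbracket\varphi_1\rrbracket\supseteq\cdots$ with empty intersection, then $P(\varphi_n)\to 0$. My first attempt would be a compactness argument, as for cylinder sets in a product of finite spaces. When $\pazocal{L}$ has only relation symbols, a model with domain $\{c_i\}$ is a point of $2^{\text{atomic sentences}}$, which is compact. Each quantifier-free $\llbracket\varphi\rrbracket$ is clopen there, so a decreasing sequence of them with empty intersection is eventually empty, and countable additivity is automatic on the quantifier-free subalgebra.

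\emph{Main obstacle.} The difficulty is that $\pazocal{B}$ also contains sets defined by quantified sentences. These are only Borel in the product topology, not clopen, so the compactness argument fails for them. For example, take $\varphi_n:=\exists x R(x)\land\bigwedge_{i<n}\neg R(c_i)$. This is a decreasing sequence with empty intersection over $Mod(\pazocal{L})$. Here $P(\varphi_n)\to0$ is precisely an instance of (G3), so continuity at $\emptyset$ on all of $\pazocal{B}$ is essentially equivalent to the Gaifman condition.

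In fact I would check whether (G1) and (G2) alone can secure it, and I suspect they cannot. The set $\{\theta: Mod(\pazocal{L})\vDash\theta\}\cup\{\neg R(c_i):i\in\omega\}\cup\{\exists xR(x)\}$ appears finitely satisfiable, hence consistent by compactness. The $0$--$1$ valuation given by the theory of any of its models would then satisfy (G1) and (G2) but not (G3).

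So the step I expect to need care is this one. I would attempt the proof by restricting first to the quantifier-free algebra, where countable additivity holds by the compactness argument above. I would then identify $\mu_P(\llbracket\varphi\rrbracket)$ with $P(\varphi)$ for quantified $\varphi$ by induction on quantifier depth, using the $\sigma$-additivity of $\mu_P$ on the countable union $\llbracket\exists x\psi\rrbracket=\bigcup_i\llbracket\psi(c_i)\rrbracket$. This inductive step is exactly where (G3), or some substitute hypothesis on $P$, has to be invoked. I would expect the lemma to require it, as in the original setting of \cite{G&S1982-GAIPOR}, where the Gaifman condition is part of the definition.
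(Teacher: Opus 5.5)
Your suspicion is correct, and it can be stated outright: with only (G1) and (G2) as hypotheses, and with $Mod(\pazocal{L})$ the structures in which each $c_i$ denotes itself (so every element is named), the lemma is false. Your valuation refutes it.

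The set $\Sigma=\{\theta : Mod(\pazocal{L})\vDash\theta\}\cup\{\neg R(c_i) : i\in\omega\}\cup\{\exists x\,R(x)\}$ is finitely satisfiable. A finite part mentions only finitely many $c_i$, so interpreting $R$ as $\{c_j\}$ for an unmentioned $j$ gives a member of $Mod(\pazocal{L})$ satisfying it. Hence $\Sigma$ has a model $\pazocal{N}$. The characteristic function $V$ of $Th(\pazocal{N})$ satisfies (G1) and (G2), because every $Mod(\pazocal{L})$-validity holds in $\pazocal{N}$. To refute the lemma itself, and not only (G3), suppose $\mu$ is countably additive with $\mu(\llbracket\varphi\rrbracket)=V(\varphi)$. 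Then
\[1=\mu\bigl(\llbracket\exists x\,R(x)\rrbracket\bigr)=\mu\Bigl(\bigcup_{i}\llbracket R(c_i)\rrbracket\Bigr)\le\sum_{i}\mu\bigl(\llbracket R(c_i)\rrbracket\bigr)=\sum_{i}V(R(c_i))=0.\]
Equivalently, your $\varphi_n$ have $V(\varphi_n)=1$ for all $n$ although $\bigcap_n\llbracket\varphi_n\rrbracket=\emptyset$.

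The paper gives no proof of this lemma. It cites Gaifman and Snir, whose probability functions (as the paper itself defines them) satisfy (G3) by definition. With (G3) restored, the statement is Gaifman's representation theorem. Your fallback is the standard route to it: countable additivity on the quantifier-free part, then climbing through the quantifiers with (G3).

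The compactness argument the paper gives for Lemma~\ref{lem:DC Measure on Mod Space}(2) works only because $\mathcal{M}_{\pazocal{L}}$ contains every model of $\mathsf{PA}^+$. In $Mod(\pazocal{L})$ the model that compactness produces is not available; your $\pazocal{N}$, with its unnamed $R$-witness, is exactly such a model.

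The error propagates. Proposition~\ref{prop:G1 and G2 imply G3} is derived from this lemma, and your $V$ is a direct counterexample to it. So the fixed-domain assumption does not entail the Gaifman condition. The correct relation is the one you found: given (G1) and (G2), (G3) is equivalent to continuity at $\emptyset$ on $\pazocal{B}$, i.e.\ to representability by a countably additive measure on $\sigma(\pazocal{B})$.

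If the $c_i$ were not required to exhaust the domain, the lemma would become true, by L\"owenheim--Skolem and the Stone-space argument. But then the third equality in the proof of Proposition~\ref{prop:G1 and G2 imply G3} fails, so no reading rescues both results.

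Two minor repairs to your sketch:
\begin{enumerate}
\item The lemma allows function symbols, and with them the quantifier-free algebra is not compact either. We have $\bigcap_n\llbracket\bigwedge_{i<n}f(c_0)\neq c_i\rrbracket=\emptyset$. Yet a model of $\{\theta : Mod(\pazocal{L})\vDash\theta\}\cup\{f(c_0)\neq c_i : i\in\omega\}$ yields a (G1)/(G2) valuation giving all these sets value $1$. So (G3), applied to the valid sentence $\exists x\,(f(c_0)=x)$, is needed already at that stage. Your restriction to relational languages is therefore essential rather than a convenience.
\item The pairs you cite for item (4) presuppose the invariance being proved. The clean derivation uses the disjoint pair $\varphi,\neg\psi$ when $Mod(\pazocal{L})\vDash\varphi\to\psi$, giving $P(\varphi)+1-P(\psi)=P(\varphi\lor\neg\psi)\le 1$.
\end{enumerate}
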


\begin{proof}[Proof of Proposition 3]
Let $P: \pazocal{SL} \to [0,1]$ be a function satisfying (G1) and (G2) with respect to $Mod(\pazocal{L}) \vDash$. Then by lemma 2.1 there is a probability measure $\mu_{P} : \sigma(\pazocal{B}) \to [0,1]$ such that $P(\varphi) = \mu_{P}(\llbracket \varphi \rrbracket)$ for all $\varphi \in \pazocal{SL}$. Now fix an arbitrary sentence of the form $\exists x \psi(x) \in \pazocal{SL}$.
Then we can infer that
   \begin{align*}
        P(\exists x \psi(x)) &=  \mu_{P}\big(\llbracket \exists x \psi(x)\rrbracket\big) \\
        &= \mu_{P}\big(\{\pazocal{M} \in Mod(\pazocal{L}) \mid \pazocal{M} \vDash \exists x \psi(x)\}\big) \\
        &= \mu_{P}\big(\{\pazocal{M} \in Mod(\pazocal{L}) \mid \pazocal{M} \vDash \psi(c_i) \text{ for some } i \in \omega\}\big) \\
        &= \mu_{P}\big(\bigcup_{i \in \omega} \llbracket \psi(c_i)\rrbracket\big) \\
        &= \text{sup} \big\{ \mu_{P}\big(\llbracket \bigvee_{i =1}^n \psi(c_i)\rrbracket\big) \mid n \in \omega\big\} \\
        &= \text{sup}\{P\big(\bigvee_{i =1}^n \psi(c_i)\big) \mid n \in \omega\},
    \end{align*}
    where the third equality relies on the fixed domain assumption.
\end{proof}

While it has been commonplace in defining probabilities over logical languages to both fix the domains of the structures in the model space as well as impose the Gaifman condition, Proposition \ref{prop:G1 and G2 imply G3} demonstrates that the domain assumptions alone suffice to ensure our probability functions satisfy the Gaifman condition. 

As we noted above, however, when we expand our languages to include that of arithmetic, these domain assumptions got in the way of our ability to find accessible priors. The following proposition demonstrates that even if we were to drop the domain assumptions, the Gaifman condition alone is sufficient to cause us problems in finding arithmetically definable priors. Let $V_\mathbb{N}$ be the function which assigns 1 to the sentences for which $\mathbb{N} \vDash \varphi$ and 0 otherwise.

\begin{theorem}
    Let $Mod(\pazocal{L}_A)$ be the set of models of $\mathsf{PA}$, i.e., the models of $\mathsf{PA}$ without any domain restrictions.
    Let $P : \pazocal{SL}_A \to [0,1]$ be a real-valued function which satisfies
    \begin{compactitem}
        \item[(i)] If $Mod(\pazocal{L}_A) \vDash \varphi$, then $P(\varphi)=1$
        \item[(ii)] If $Mod(\pazocal{L}_A) \vDash \neg (\varphi \wedge \psi)$, then $P(\varphi \vee \psi) = P(\varphi) \cdot P(\psi)$
    \end{compactitem}
    as well as the Gaifman condition (G3).
    Then for all $\varphi \in \pazocal{SL}_A$, $P(\varphi) = V_{\mathbb{N}}(\varphi)$.
\end{theorem}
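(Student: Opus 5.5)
As the paper words it, the statement cannot be proved. My plan is therefore to show that hypotheses (i) and (ii) pin $P$ down completely, and that the function they force contradicts the conclusion.

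\textbf{Step 1: (i) and (ii) force $P \equiv 1$.} Fix any $\varphi \in \pazocal{SL}_A$. Every model of $\mathsf{PA}$ satisfies both $\varphi \vee \neg\varphi$ and $\neg(\varphi \wedge \neg\varphi)$. By (i), $P(\varphi \vee \neg \varphi) = 1$. By the multiplicative clause (ii), $P(\varphi \vee \neg\varphi) = P(\varphi)\cdot P(\neg\varphi)$. Hence
\[P(\varphi)\cdot P(\neg \varphi) = 1 .\]
Since both factors lie in $[0,1]$, this forces $P(\varphi) = P(\neg\varphi) = 1$. So every function satisfying (i) and (ii) is the constant function $1$.

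\textbf{Step 2: the constant function satisfies every hypothesis.} I then check that $P \equiv 1$ really meets all three conditions. Condition (i) holds trivially. Condition (ii) holds because $1 = 1\cdot 1$. The Gaifman condition (G3) holds because $\sup\{1 \mid n\in\omega\} = 1 = P(\exists x\,\psi(x))$. So the hypothesis class is non-empty and consists of exactly one function.

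\textbf{Step 3: that function is not $V_{\mathbb{N}}$.} Take the false sentence $\underline{0} = \underline{1}$. Then $V_{\mathbb{N}}(\underline{0}=\underline{1}) = 0$, while $P(\underline{0}=\underline{1}) = 1$. So the conclusion $P = V_{\mathbb{N}}$ fails for the unique $P$ the hypotheses allow. The theorem as stated is therefore false, and the ``proof'' I would give is this refutation. There is no real obstacle: the only point needing care is the observation in Step 1 that the product rule applied to $\varphi \vee \neg\varphi$ already collapses $P$.

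\textbf{What presumably was meant.} The content the surrounding text motivates is the version with the additive clause $P(\varphi\vee\psi) = P(\varphi)+P(\psi)$ in place of (ii). That version is what yields the collapse onto $V_{\mathbb{N}}$ from the Gaifman condition and the numerals, and hence, via Theorem~\ref{thm:Tarski}, non-definability. I would flag this discrepancy rather than silently prove the additive version.
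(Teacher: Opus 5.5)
Your refutation is correct. Because $\varphi\vee\neg\varphi$ and $\neg(\varphi\wedge\neg\varphi)$ hold in every model of $\mathsf{PA}$, the product clause gives $P(\varphi)\cdot P(\neg\varphi)=1$. So the hypotheses are met by exactly one function, $P\equiv 1$, and it disagrees with $V_{\mathbb{N}}$ on $\underline{0}=\underline{1}$. The paper's own proof confirms your diagnosis of a typo. It inducts on the arithmetical hierarchy, and in the $\Pi_n$ step it asserts $P(\neg\psi)=1-P(\psi)$ with the justification ``by (ii)''. That is valid only for the additive clause $P(\varphi\vee\psi)=P(\varphi)+P(\psi)$. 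Under the printed product clause this is exactly the step that fails: for $P\equiv 1$ the two sides are $1$ and $0$. Since $P\equiv 1$ is trivially computable, the corollary the paper derives from this theorem (non-definability) is also false as printed.

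Your argument and the paper's therefore address different statements. Yours shows the printed theorem is false. The paper's establishes the intended additive version, which is the result that feeds into Tarski's theorem. You name that version but do not prove it.

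If you want to cover it, the paper's route works:
\begin{enumerate}
\item Show $P$ agrees with $V_{\mathbb{N}}$ on sentences that $\mathsf{PA}$ decides correctly.
\item Pass through negation by additivity.
\item Pass through $\exists x\,\psi(x)$ using (G3) together with monotonicity and finite subadditivity, both of which follow from (i) and the additive (ii).
\end{enumerate}
One caution: begin the induction at atomic (or $\Delta_0$) sentences rather than at $\Sigma_1$ as the paper does. $\Sigma_1$-completeness only gives $P(\varphi)=1$ for \emph{true} $\Sigma_1$ sentences. A false one such as $\neg\mathrm{Con}(\mathsf{PA})$ is not refuted by $\mathsf{PA}$. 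It receives probability $0$ only through (G3), applied to its numerical instances, each of which $\mathsf{PA}$ refutes.
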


\begin{proof}
    We prove the above by induction on the arithmetic hierarchy. In the case where $\varphi \in \Sigma_1$, $V_\mathbb{N}(\varphi) = P(\varphi)$ follows from the $\Sigma_1$-completeness of $\mathsf{PA}$ and (G1). 

    Next suppose $\varphi \in \Pi_n$; in particular, let $\varphi = \neg \psi$ with $\psi \in \Sigma_n$.
    By the induction hypothesis, $P(\psi)=V_\mathbb{N}(\psi).$
    Then
    \[P(\neg \psi) = 1 - P(\psi) = 1 - V_\mathbb{N}(\psi) = V_\mathbb{N}(\neg\psi),\]
    where the first equality follows from (ii).
    
    Finally suppose $\varphi \in \Sigma_{n+1}$. Suppose that $\varphi := \exists x \psi(x)$ for $\psi(x) \in \Pi_n$ (the more general case with a block of quantifiers follows similarly).
    In the case where $V_\mathbb{N}(\varphi) = 1$, we know that there is some $k \in \omega$ such that $V_\mathbb{N}(\psi(\underline{k})) = 1$. By our induction hypothesis and (G3) it follows that 
    \[P\big(\exists x \psi(x)\big) = \sup \big\{ P\big(\bigvee_{i\leq n}\psi(\underline{i})\big) \mid n \in \omega\big\} = 1.\]
    In the case where $V_\mathbb{N}(\varphi) = 0$, we know that there is no $k \in \omega$ such that $V_\mathbb{N}(\psi(\underline{k})) = 1$. Hence, again, our induction hypothesis along with (G3) gives us
     \[P\big(\exists x \psi(x)\big) = \sup \big\{ P\big(\bigvee_{i\leq n}\psi(\underline{i})\big) \mid n \in \omega\big\} = 0.\]
\end{proof}

Now observe that in the previous theorem we were only concerned with probabilities defined over $\pazocal{SL}_A$. The following corollary demonstrates that this is sufficient to block probabilities over the sentences of an extended language $\pazocal{SL}_A^+$ from being arithmetically definable. 

\begin{corollary}
    Let $P : \pazocal{SL}_A^+ \to [0,1]$ be a probability function as described in Theorem 3. Then $P$ is not arithmetically definable.
\end{corollary}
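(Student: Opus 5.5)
The plan is to reduce the corollary to Tarski's theorem (Theorem \ref{thm:Tarski}), in exactly the way the proof of Theorem \ref{thm:No Definable GS} does, with Theorem 3 supplying the identification of $P$ with truth on arithmetic sentences.

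First, suppose towards a contradiction that $P : \pazocal{SL}_A^+ \to [0,1]$ satisfies the hypotheses of Theorem 3 and is arithmetically definable. Fix an arithmetically definable approximation $g_P : \omega \times \omega \to \mathbb{Q}$ with $|P(\varphi) - g_P(\ulcorner\varphi\urcorner, n)| \leq 2^{-n}$ for every sentence $\varphi$ and every $n$.

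Second, restrict attention to $\pazocal{SL}_A \subseteq \pazocal{SL}_A^+$. The restriction $P\restriction\pazocal{SL}_A$ still satisfies (i), (ii) and (G3) relative to $\mathsf{PA}$. Here one should check that validity over the models of the extended theory agrees with validity over $Mod(\pazocal{L}_A)$ for pure arithmetic sentences; this holds because every model of $\mathsf{PA}$ expands to an $\pazocal{L}_A^+$-structure by interpreting the empirical symbols arbitrarily. So Theorem 3 applies and gives $P(\varphi) = V_\mathbb{N}(\varphi)$ for all $\varphi \in \pazocal{SL}_A$. The one genuine point to verify is this compatibility of the "as described in Theorem 3" hypotheses with the restriction. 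The rest is routine.

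Third, define
\[T := \{x \mid \text{Sent}(x) \wedge \forall n\,(|1 - g_P(x,n)| \leq 2^{-n})\},\]
where $\text{Sent}$ picks out codes of $\pazocal{L}_A$-sentences. This set is arithmetically definable from the formula defining $g_P$.
\begin{itemize}
\item If $\mathbb{N} \vDash \varphi$, then $P(\varphi) = 1$, so the condition holds for every $n$.
\item If $\mathbb{N} \nvDash \varphi$, then $P(\varphi) = 0$, so $|g_P(\ulcorner\varphi\urcorner,n)| \leq 2^{-n}$. The condition then fails already at $n = 2$, since there $|1 - g_P| \geq 3/4 > 1/4$.
\end{itemize}
Hence $T$ is exactly the set of codes of true arithmetic sentences. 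This contradicts Tarski's theorem applied to $\mathbb{N}$, so no such $P$ is arithmetically definable.
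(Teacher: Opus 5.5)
Your proof is correct and is essentially the paper's argument: the set of codes $x$ of $\pazocal{L}_A$-sentences with $\forall n\,(|1-g_P(x,n)|\leq 2^{-n})$ is arithmetically definable, and by Theorem 3 it is exactly the truth set of $\mathbb{N}$, which contradicts Tarski's theorem. You make explicit two steps the paper leaves tacit (the restriction to $\pazocal{SL}_A$, and the failure of the condition at $n=2$ for false sentences); for the restriction, only the trivial direction is needed, namely that $\pazocal{L}_A$-reducts of models of $\mathsf{PA}^+$ are models of $\mathsf{PA}$, so your appeal to expanding models of $\mathsf{PA}$ ``arbitrarily'' is unnecessary (and would require definable interpretations of the new symbols if $\mathsf{PA}^+$ includes induction for them).
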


\begin{proof}
    Supposing towards a contradiction that $P$ is arithmetically definable, then there is some $f_{P}: \omega \times \omega \to \mathbb{Q}$ which approximates $\ulcorner P\urcorner$. Then the set 
    \[\{\ulcorner \varphi\urcorner \mid \text{Sent}(\ulcorner \varphi \urcorner) \wedge \forall n (|1-f_{P}(\ulcorner\varphi\urcorner, n)| \leq 2^{-n})\}\]
    is arithmetically definable since $f_{P}$ is.\footnote{Recall that Sent($x$) is a formula which holds iff $x$ is a Gödel code for a \textit{sentence} in the language of arithmetic. In this way, $\mathsf{PA}$ is able to isolate the sentences in $\pazocal{SL}_A$ from those in $\pazocal{SL}_A^+$.} By theorem 2.1, this is a truth predicate for the standard model $\mathbb{N}$ of $\mathsf{PA}$ which is in contradiction with Tarski's undefinability of truth theorem.
\end{proof}

This corollary tells us that the Gaifman condition alone is problematic when trying to construct `accessible' priors. Even after dropping the fixed domain assumptions, the Gaifman condition still causes definable priors to run into Tarski's undefinability of truth theorem. Moreover, it demonstrates that the Gaifman condition and the framework decision of restricting the domains of the structures in the model space are very closely related. As we have already seen, the Gaifman condition follows from the domain assumptions. While the Gaifman condition cannot influence the assumptions we make behind the $\vDash$ relation we use to define the probability function in the first place, theorem 2.1 demonstrates that, in the context of arithmetic, the Gaifman condition forces the conditional statement in condition (i) into a biconditional $\mathbb{N} \vDash \varphi \Leftrightarrow P(\varphi)=1$. 
Since the standard model is the only model whose domain is exhausted by interpretations of the terms in $\pazocal{L}_A$, then the Gaifman condition both trivializes the probability function over this language (as it collapses into truth in the standard model $\mathbb{N}$) and thereby assures that it is definable nowhere in the arithmetic complexity hierarchy.

\section{Weakening the Assumptions}

In this section, we explore possibilities for arithmetically definable probabilities through relaxing the Gaifman condition. In the previous section, we saw that the (G1) and (G2) conditions on probability functions entail the Gaifman condition. So, we must also relax these requirements from conditions expressed in terms of logical satisfaction to conditions expressed in terms of provability. Thus we have the following definition:

\begin{definition}\label{def: deductively closed probability function}
    A \textit{deductively-closed probability function} on $\mathsf{PA}^+_A$ is a function $P_{\vdash}: \pazocal{SL}^+_A \to [0,1]$ that satisfies the following axioms.
    \begin{enumerate}[label=(D\arabic*)]
        \item If $\mathsf{PA}^+ \vdash \varphi$ then $P_\vdash (\varphi)=1$.
        \item If $\mathsf{PA}^+ \vdash \neg(\varphi \land \psi)$ then $P_\vdash (\varphi \lor \psi) = P_\vdash (\varphi) + P_\vdash (\psi)$.
    \end{enumerate}
\end{definition}

Naturally this definition generalizes to any theory $T$, by replacing $\mathsf{PA}^+$ with $T$ in conditions (D1) and (D2).
In what follows we will only be concerned with $\mathsf{PA}^+$, so we will generally write ``deductively-closed probability function'' in place of the more cumbersome ``deductively-closed probability function on $\mathsf{PA}^+$''.
Note the similarity to our original (G1) and (G2). These conditions do the intended work, but without a fixed domain.

\subsection{A Limit Computable Prior}

We now demonstrate that there are arithmetically definable \textit{deductively-closed} probabilities over arithmetic languages. Hence the weakening of the conditions which define probability functions saves us from immediately running into issues with Tarski's undefinability of truth theorem. The construction is based on an observation of Feferman's (\cite{feferman1960}) that the theory $\mathsf{PA}$ is capable of defining complete consistent extensions of itself.\footnote{Of course, it is unable to prove that the set defined is indeed a complete consistent extension of $\mathsf{PA}$. But this is true in the standard model.} The idea we chase is simply to take such a definable complete extension and note that its characteristic function is a probability function.

As Feferman shows,\footnote{See \cite[pp. 46-47]{feferman1960}.} the Lindenbaum construction for generating maximally consistent sets of sentences from definable consistent sets of sentences can be carried out in $\mathsf{PA}$.\footnote{See also \cite{lindstrom1997}.} Since we know that the set of theorems of $\mathsf{PA}$ is $\Sigma_1$-definable, we briefly sketch how to define a complete consistent extension of $\mathsf{PA}^+$ in $\mathsf{PA}$.\footnote{For details see \cite{Chao2016GdelsSI} section 3.} Let $\nu : \omega \to \omega$ be a computable enumeration of the sentences of $\pazocal{L}_A^+$. Then the following set can be constructed in $\mathsf{PA}$:
\begin{align*}
    \Gamma_0 &:= \{\ulcorner \varphi \urcorner \mid \mathsf{PA}^+ \vdash \varphi\} \\
    \Gamma_{n+1} &:=
    \begin{cases}
        \Gamma_n \cup \{\ulcorner\varphi \urcorner \} & \text{ if } \Gamma_n \nvdash \neg \varphi \text{ where } \nu(n+1) = \ulcorner \varphi \urcorner \\
        \Gamma_n \cup \{\ulcorner\neg \varphi \urcorner\} & \text{ if } \Gamma_n \vdash \neg \varphi \text{ where } \nu(n+1) = \ulcorner \varphi \urcorner
    \end{cases}\\
    \Gamma &:= \bigcup_{n \in \omega} \Gamma_n.
\end{align*}

\noindent
The set $\Gamma$ is $\Delta_2$-definable.\footnote{See \cite{Chao2016GdelsSI} Theorem 6.} It is important to note here that $\Delta_2$ is the best we can do.

\begin{theorem}
    There are no $\Sigma_1$, or $\Pi_1$- definable complete consistent extensions of $\mathsf{PA}^+$.
\end{theorem}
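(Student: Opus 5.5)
The argument has two steps. First I will show that a $\Pi_1$-definable complete consistent extension would in fact be $\Sigma_1$-definable, so the $\Pi_1$ case collapses to the $\Sigma_1$ case. Then I will show that a $\Sigma_1$-definable (that is, c.e.) complete consistent extension would be decidable, and rule that out by the G\"odel--Rosser incompleteness theorem, equivalently the essential undecidability of $\mathsf{PA}$. Throughout, $\mathsf{PA}^+$ is $\mathsf{PA}$ formulated in $\pazocal{L}_A^+$ with its axiom set computable, so its theorems form a $\Sigma_1$ set.

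\textbf{Step 1 (from $\Pi_1$ to $\Sigma_1$).} Let $\Gamma$ be a complete consistent extension of $\mathsf{PA}^+$. By consistency and completeness, for every sentence $\varphi$ of $\pazocal{L}_A^+$ exactly one of $\varphi,\neg\varphi$ lies in $\Gamma$. Thus
\[\ulcorner\varphi\urcorner\in\Gamma \iff \text{Sent}^+(\ulcorner\varphi\urcorner)\wedge \ulcorner\neg\varphi\urcorner\notin\Gamma,\]
where $\text{Sent}^+$ is the $\Delta_0$ formula recognizing codes of $\pazocal{L}_A^+$-sentences. The map $\ulcorner\varphi\urcorner\mapsto\ulcorner\neg\varphi\urcorner$ is primitive recursive. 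Hence if $\Gamma$ is $\Pi_1$, its complement relative to the sentences is $\Sigma_1$, and so is $\Gamma$ itself. The same identity shows that a $\Sigma_1$ such $\Gamma$ is also $\Pi_1$, hence $\Delta_1$, which by Post's Theorem means computable. So it suffices to show that no complete consistent extension of $\mathsf{PA}^+$ is computable.

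\textbf{Step 2 (no computable completion).} Suppose $\Gamma$ were computable. Its restriction to the pure sentences, $\Gamma\cap\pazocal{SL}_A$, is then computable, since $\text{Sent}$ is decidable. Because $\Gamma$ is deductively closed, this restriction contains $\mathsf{PA}$. It is also consistent and complete for $\pazocal{L}_A$-sentences. It is therefore a computably axiomatized (indeed decidable) consistent complete extension of $\mathsf{PA}$ in $\pazocal{L}_A$, which contradicts the G\"odel--Rosser theorem.

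Alternatively, I can argue directly from the representability of computable sets together with the diagonal lemma. Take $\delta(x)$ representing $\Gamma$ in $\mathsf{PA}$, and form $\sigma \leftrightarrow \neg\delta(\ulcorner\sigma\urcorner)$. Then $\sigma\in\Gamma$ and $\sigma\notin\Gamma$ each lead to a contradiction with $\Gamma\supseteq\mathsf{PA}$ and the consistency of $\Gamma$. This is the same reasoning as the Tarski argument used for Theorem~\ref{thm:Tarski}, with truth in $\mathbb{N}$ replaced by membership in $\Gamma$.

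The main obstacle is not Step 1, which is routine, but making sure the reduction in Step 2 is legitimate. Two points need care. Completeness is used to turn c.e.\ into computable. Representability of the computable set $\Gamma$ in $\mathsf{PA}$ (or Rosser's trick) is what avoids needing $\omega$-consistency. Once $\Gamma$ is known to be decidable, the diagonal argument goes through for every consistent $\Gamma\supseteq\mathsf{PA}$, and this closes both the $\Sigma_1$ and $\Pi_1$ cases.
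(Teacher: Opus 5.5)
Your proof is correct and follows the paper's route: you reduce the $\Pi_1$ case to the $\Sigma_1$ case by passing to the complement and using negation, and you rule out the $\Sigma_1$ case by incompleteness. You do the reduction directly with the primitive recursive map $\ulcorner\varphi\urcorner\mapsto\ulcorner\neg\varphi\urcorner$, where the paper detours through $\text{Prf}_{\mathsf{PA}}(\ulcorner\varphi\leftrightarrow\neg\psi\urcorner)$, and your explicit step (a c.e.\ complete consistent extension is decidable, then G\"odel--Rosser or representability plus the diagonal lemma) fills in what the paper compresses into ``G\"odel's first incompleteness theorem and the $\Sigma_1$-completeness of $\mathsf{PA}$.''
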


\begin{proof}
    That there are no $\Sigma_1$-definable complete extensions follows immediately from Gödel's first incompleteness theorem and the $\Sigma_1$-completeness of $\mathsf{PA}$. Now suppose towards a contradiction there is a $\Pi_1$-definable complete extension $\Gamma$ of $\mathsf{PA}^+$. Then its complement set of sentences, 
    \[\Gamma' =\{\ulcorner\varphi \urcorner \mid \text{Sent}(\ulcorner \varphi \urcorner) \wedge \ulcorner\varphi\urcorner \notin \Gamma\}\]
    is $\Sigma_1$-definable. But then, by double negation elimination, the set
    \[X =\{\ulcorner\varphi \urcorner \mid \exists \ulcorner\psi\urcorner \in \Gamma' \; \text{Prf}_{\mathsf{PA}}(\ulcorner \varphi \leftrightarrow \neg \psi \urcorner)\} = \Gamma\]
    where $X$ is $\Sigma_1$ and hence so is $\Gamma$. But this is a contradiction since we have already established that there are no $\Sigma_1$-definable completions of $\mathsf{PA}$.
\end{proof}

We can then use this $\Delta_2$-definable complete extension of $\mathsf{PA}^+$, $\Gamma$, to define a probability function $P_\Gamma : \pazocal{SL}_A^+ \to [0,1]$ by letting
\begin{align*}
    P_\Gamma(\varphi) &:= 
    \begin{cases}
        1 & \text{ if } \ulcorner\varphi\urcorner \in \Gamma \\
        0 & \text{ if } \ulcorner \varphi \urcorner \notin \Gamma
    \end{cases}
\end{align*}
for all $\varphi \in \pazocal{SL}_A^+$.
Clearly, $P_\Gamma$ is $\Delta_2$-definable as it is the characteristic function of a $\Delta_2$-definable set. It is also easily seen that $P_\Gamma$ is a deductively-closed probability function as in definition 4.

As a prior, $P_\Gamma$ is rather unsatisfying. It can easily be seen that conditionalization does not accomplish anything over 0-1 probability functions. 
This can be improved upon. Suppose we have $E_1, \ldots, E_k$ empirical predicates in our language $\pazocal{L}_A^+$ for $k \geq 1$. Then we can permute the base case of our Lindenbaum construction above in order to obtain different probability functions. For any sentence of the form $\bigwedge_{l \leq k} \pm E_l(\underline{n_l})$ where $\pm E_i(\underline{n})$ denotes either $E_l(\underline{n})$ or $\neg E_l(\underline{n})$, the set 
\[\{\ulcorner \varphi\urcorner \mid \mathsf{PA}^+ \vdash \varphi\} \cup \{\ulcorner \bigwedge_{l \leq k} \pm E_l(\underline{n_l})\urcorner\}\] 
is also $\Sigma_1$-definable. Now let $\langle \psi_i  \mid {i \leq n}\rangle$ be a finite sequence of consistent sentences of the form $\bigwedge_{l \leq k} \pm E_l(\underline{n_l})$.
Now let $\Theta$ be the limit of the Lindenbaum construction over the base set
\[\{\ulcorner \varphi\urcorner \mid \mathsf{PA}^+ \vdash \varphi\} \cup \{\psi_i \mid i \leq n\}.\]
Then again, $\Theta$ is a $\Delta_2$-definable complete extension of $\mathsf{PA}^+$ and hence the characteristic function $P_\Theta$ is a $\Delta_2$-definable, deductively-closed probability function over $\pazocal{SL}_A^+$. 

Now for any series of sequences $\langle \psi^0_i \mid i \leq n_0\rangle, \ldots, \langle \psi^m_i \mid i \leq n_m\rangle$ as defined above, we can obtain a series of deductively-closed probability functions $P_{\Theta_0}, \ldots, P_{\Theta_m}$ where $P_{\Theta_j}$ is defined as characteristic function of the limit of the Lindenbaum construction over
\[\{\ulcorner \varphi\urcorner \mid \mathsf{PA}^+ \vdash \varphi\} \cup \{\psi^j_i \mid i\leq n_j\}\]
for all $0 \leq j \leq m$.

We can now take a mixture of these probability functions to obtain a non-trivial arithmetically definable probability function. To do this, fix a sequence $\langle q_j \mid j \leq m \rangle$ of rational numbers such that $0 < q_j < 1$ for all $j \leq m$ and $\sum_{j \leq m} q_j =1$. Then we can define a probability function $P : \pazocal{SL}_A^+ \to [0, 1]$ by letting
\[P (\varphi) := \sum_{j \leq m} q_j \cdot P_{\Theta_j}(\varphi)\]
for all $\varphi \in \pazocal{SL}_A^+$. Since every $P_{\Theta_j}$ for $0 \leq j \leq m$ is $\Delta_2$-definable and we can find computable sequences $\langle q_j \mid j \leq m\rangle$ of rational numbers summing to 1,\footnote{Take, for example, $q_j = \frac{1}{m}$ for all $0 \leq j \leq m$.} then the mixture $\sum_{j \leq m } q_j \cdot P_{\Theta_j}(\varphi)$ is also $\Delta_2$-definable. By choosing a sequence of rationals composed not merely of 1's and 0's, we obtain a non-trivial $\Delta_2$-definable probability function over $\mathsf{PA}^+$ on which conditionalization can be shown to in fact change the prior. Hence there are $\Delta_2$-definable priors which allow for non-trivial Bayesian updating.

This establishes the following:

\begin{theorem}
    There are $\Delta_2$-definable deductively-closed probability functions on $\pazocal{SL}_A$.
\end{theorem}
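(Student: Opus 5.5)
The plan is to exhibit a single witness: the characteristic function $P_\Gamma$ of a $\Delta_2$-definable complete consistent extension $\Gamma$ of $\mathsf{PA}$ (or of $\mathsf{PA}^+$, whose restriction to $\pazocal{SL}_A$ gives the result as stated). If non-triviality is wanted, we can instead use a finite rational mixture of such functions. There are three things to check: that $\Gamma$ is a complete consistent extension, that $P_\Gamma$ satisfies (D1) and (D2), and that $P_\Gamma$ is $\Delta_2$ in the sense of the approximation definition of Section 2.

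\textbf{Step 1: $\Gamma$ is a complete consistent extension.} We run the Lindenbaum construction above with $\Gamma_0$ the set of theorems. By induction on $n$, each $\Gamma_n$ is consistent. The first clause adds $\varphi$ only when $\Gamma_n \nvdash \neg\varphi$. The second clause adds $\neg\varphi$ when $\Gamma_n \vdash \neg\varphi$, which is harmless. Consistency of $\Gamma=\bigcup_n\Gamma_n$ then follows by compactness of proofs. Completeness holds because every sentence is $\nu(n+1)$ for some $n$, so either it or its negation lands in $\Gamma$. Deductive closure follows: if $\Gamma\vdash\varphi$ but $\varphi\notin\Gamma$, then $\neg\varphi\in\Gamma$, contradicting consistency.

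\textbf{Step 2: the axioms.} (D1) holds because $\Gamma_0\subseteq\Gamma$. For (D2), suppose $\mathsf{PA}^+\vdash\neg(\varphi\wedge\psi)$. Then $\Gamma$ contains at most one of $\varphi,\psi$. Since $\Gamma$ is complete and deductively closed, $\varphi\vee\psi\in\Gamma$ exactly when $\varphi\in\Gamma$ or $\psi\in\Gamma$. Hence $P_\Gamma(\varphi\vee\psi)=P_\Gamma(\varphi)+P_\Gamma(\psi)$. For a mixture $\sum_j q_jP_{\Theta_j}$, (D1) and (D2) are preserved because each condition is linear and $\sum_j q_j=1$.

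\textbf{Step 3: $\Delta_2$-definability.} This is the main obstacle, and I would address it with the limit lemma (Shoenfield). The relation ``$\Gamma_n\vdash\neg\varphi$'' is not decidable, since $\Gamma_n$ contains all theorems. However, it is equivalent to $\mathsf{PA}^+\vdash\bigwedge(\text{finitely many added sentences})\to\neg\varphi$, which is $\Sigma_1$. So the finite sequence of choices made up to stage $n$ is computable from the oracle $0'$, uniformly in $n$. Therefore $\Gamma$ is computable in $0'$, hence $\Delta_2$ by Post's theorem; alternatively, one can cite \cite{Chao2016GdelsSI} Theorem 6, as above.

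It then remains to pass from the set to the real-valued function. Take $g(\ulcorner\varphi\urcorner,n):=\chi_\Gamma(\ulcorner\varphi\urcorner)$, ignoring $n$, which trivially satisfies the $2^{-n}$ bound. For a mixture, take $g(\ulcorner\varphi\urcorner,n):=\sum_j q_j\chi_{\Theta_j}(\ulcorner\varphi\urcorner)$, which is exact. The graph of $g$ is a Boolean combination of $\Delta_2$ conditions together with computable rational arithmetic, so it is defined by both a $\Sigma_2$ formula and a $\Pi_2$ formula. To meet the paper's requirement that these be $\mathsf{PA}$-provably equivalent, I would formalize the construction by a single $0'$-recursion inside $\mathsf{PA}$, following Feferman. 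If that proves awkward, I would fall back on the looser semantic reading of $\Delta_2$, which is what the earlier discussion of $P_\Gamma$ appears to use.
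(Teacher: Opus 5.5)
Your proposal is correct and takes essentially the same route as the paper: a Feferman-style Lindenbaum completion $\Gamma$ of the theorems of $\mathsf{PA}^+$, with its characteristic function $P_\Gamma$ as the witness and finite rational mixtures for non-triviality. Where the paper simply cites \cite{Chao2016GdelsSI}, Theorem 6, for the $\Delta_2$-definability of $\Gamma$, you sketch the $0'$-computability argument, and you also verify (D1), (D2), and the restriction from $\pazocal{SL}_A^+$ to $\pazocal{SL}_A$, which the paper leaves as ``easily seen.''
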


In fact, our construction method demonstrates that there is at least a countable infinity of non-trivial -- in the sense that conditionalization can change the prior -- $\Delta_2$-definable probability functions on $\pazocal{SL}_A^+$ satisfying (D1) and (D2).

% [While this is the best news you're gunna get in this paper, it's still not really good news... Discuss limit computability interpretation of $\Delta^0_2$, with epistemic interpretation. Until the limit, you have no useful information. Even c.e. or co-c.e. has \textit{something}.]

This is progress. We have an arithmetically definable probability function that, by satisfying the (D1) and (D2) conditions, can track mathematical reasoning in our scientific theories. We can treat this as satisfaction of Putnam's first condition. But is a $\Delta_2$ probability function `accessible' in the sense of Putnam's second condition? We think not. A $\Delta_2$ probability function can be intuitively glossed as a `limit computable' function. There is an effective procedure that, given any sentence, can yield a sequence of outputs whose limit is the desired probability. So we have some traction on the probability function. However, there is no guarantee at any finite step that our result is close to the limit to any degree of approximation nor even whether it is higher or lower. Contrast this with effective procedures for $\Sigma_1$ functions, whose outputs we at least know approach the value from below so that we can determinately say ``the probability is at least...'', and similarly the $\Pi_1$ functions in the opposite direction. In the $\Delta_2$ case, we have no such guarantees and so only have a grasp on the quantity `in the limit'. For finite beings like us, a miss here is as good as a mile. If the function can only be computed in the limit, it is for our purposes as useless as a function that cannot be arithmetically defined in the first place. But $\Delta_2$ puts us close. If only we could go any lower in the computability hierarchy we would have determinate information about our probability function at finite stages of computation. In the next section, we will learn the bitter lesson that $\Delta_2$ is the best we can do with this approach.

You might think that another problem with a $\Delta_2$ probability function is that what is limit computable is the prior probability and not a posterior or empirical quantity. What we get in the infinite limit of computation is not our final epistemic aim but only a starting point for inquiry. This seems quite useless if our goal in using inductive logic is to go beyond our starting point and make inductive inferences. What we really want is traction not on the prior probabilities but on the conditional probabilities. Do we have that? Bayes' rule tells us that every conditional probability is the ratio of two prior probabilities, each $\Delta_2$ and where we assume both are positive. Recalling that a $\Delta_2$ function is $\Delta_1$ relative to the Halting problem $\varnothing'$, it is easy to see that just as ratios of $\Delta_1$ quantities are $\Delta_1$ when well defined, so too ratios of $\Delta_2$ quantities are $\Delta_2$ when well defined. So we do have limit computability of the conditional probabilities as well. However, this is ultimately not satisfying.

Procedures that only have long-run guarantees are roundly criticized, often with Keynes' famous quip that ``in the long run, we are all dead'', but this is too quick. After all, many statistical procedures can be understood as zeroing in on an epistemic target only in the limit. Applications of large sample theory are ubiquitous in experimental designs and explicitly rely on treating asymptotic results as reliable for reasoning from finite (but large) datasets. However, it would be misleading to think that relying on a $\Delta_2$ probability measure is equally defensible. Large sample theory gives certain guarantees on rates of convergence as the number of samples increases. Scientists with good sense will only trust asymptotically valid inferences on finite data if the rate of convergence is reasonable. We have no analogous guarantees on the rate at which fluctuations from the limit eventually diminish with computation time. Without such guarantees, $\Delta_2$ probability measures are wholly unsatisfying.

\subsection{Limits \textit{of} Computable Priors}

In this section we show that, unfortunately, the $\Delta_2$ upper bound is tight: there are no $\Sigma_1$- or $\Pi_1$-definable deductively-closed probability functions.
To begin we prove an analogue of Lemma \ref{lem:Measure on Mod Space} for deductively-closed probability functions.
We let 
\[[\pazocal{M}]_\equiv = \{\pazocal{N} \mid \pazocal{N} \equiv \pazocal{M}\}\]
denote equivalence classes of structures up to elementary equivalence, and let 
\[\mathcal{M}_\pazocal{L} = Mod(\mathsf{PA}^+)/_\equiv = \{[\pazocal{M}]_\equiv \mid \pazocal{M} \vDash \mathsf{PA}^+\}\]
be the quotient space of models of $\mathsf{PA}^+$ up to elementary equivalence.
It is important to notice here that we are not making any domain assumptions on this model space.

\begin{lemma}\label{lem:DC Measure on Mod Space}
    \begin{enumerate}
        \item Every Borel probability measure on $\mathcal{M}_\pazocal{L}$ determines a deductively-closed probability function on $\pazocal{SL}^+_A$.
        \item Every deductively-closed probability function on $\pazocal{SL}_A^+$ determines a Borel probability measure on $\mathcal{M}_\pazocal{L}$.
    \end{enumerate}
\end{lemma}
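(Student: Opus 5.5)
The plan is to identify $\mathcal{M}_\pazocal{L}$ with the Stone space of complete consistent extensions of $\mathsf{PA}^+$. By the completeness theorem, $[\pazocal{M}]_\equiv \mapsto \mathrm{Th}(\pazocal{M})$ is a bijection onto the set of complete consistent theories extending $\mathsf{PA}^+$. We topologize $\mathcal{M}_\pazocal{L}$ by the basic clopen sets
\[\langle\varphi\rangle := \{[\pazocal{M}]_\equiv \mid \pazocal{M} \vDash \varphi\}, \qquad \varphi \in \pazocal{SL}_A^+,\]
which is well defined since elementarily equivalent models agree on all sentences. This space is compact, by the compactness theorem, and zero-dimensional. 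The Borel $\sigma$-algebra is generated by the sets $\langle \varphi \rangle$, since the language is countable and so these sets form a countable base.

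For part 1, given a Borel probability measure $\mu$, set $P(\varphi) := \mu(\langle\varphi\rangle)$.
\begin{compactitem}
\item[(D1)] If $\mathsf{PA}^+ \vdash \varphi$, then by soundness every model satisfies $\varphi$, so $\langle\varphi\rangle = \mathcal{M}_\pazocal{L}$ and $P(\varphi)=1$.
\item[(D2)] If $\mathsf{PA}^+ \vdash \neg(\varphi\wedge\psi)$, then $\langle\varphi\rangle \cap \langle\psi\rangle = \varnothing$ and $\langle\varphi\vee\psi\rangle = \langle\varphi\rangle\cup\langle\psi\rangle$, so finite additivity of $\mu$ gives additivity of $P$.
\end{compactitem}

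For part 2, given a deductively-closed $P_\vdash$, the first step is to check that $P_\vdash$ respects provable equivalence. From (D1) and (D2) we get $P_\vdash(\neg\varphi) = 1-P_\vdash(\varphi)$, monotonicity under provable implication, and equal values on provably equivalent sentences. These are the standard derivations, as in the paper's Proposition 1. Next, by completeness, $\mathsf{PA}^+ \vdash \varphi\leftrightarrow\psi$ iff $\langle\varphi\rangle = \langle\psi\rangle$. Hence $\mu_0(\langle\varphi\rangle) := P_\vdash(\varphi)$ is well defined on the Boolean algebra $\pazocal{C} = \{\langle\varphi\rangle \mid \varphi\in\pazocal{SL}_A^+\}$, which is exactly the algebra of clopen sets. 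By (D1) and (D2), $\mu_0$ is a finitely additive probability on $\pazocal{C}$.

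The main obstacle is upgrading $\mu_0$ to countable additivity on $\pazocal{C}$, so that Carathéodory's extension theorem applies. I would handle this by compactness. Suppose $\langle\varphi\rangle = \bigsqcup_{i}\langle\varphi_i\rangle$ is a countable disjoint union of clopen sets. Then $\langle\varphi\rangle$ is a closed subset of a compact space, hence compact, and it is covered by the open sets $\langle \varphi_i\rangle$. So finitely many of them already cover it. Disjointness then forces all but finitely many $\langle\varphi_i\rangle$ to be empty, and an empty $\langle\varphi_i\rangle$ means $\mathsf{PA}^+\vdash\neg\varphi_i$, so $P_\vdash(\varphi_i)=0$. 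Countable additivity therefore reduces to finite additivity, which we already have. This is the same move that underlies Lemma \ref{lem:Measure on Mod Space}, but with the Gaifman-style fixed domain replaced by compactness of the quotient space.

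Carathéodory's theorem then gives a unique countably additive extension $\mu$ to $\sigma(\pazocal{C})$, which is the Borel $\sigma$-algebra, and $\mu(\langle\varphi\rangle)=P_\vdash(\varphi)$ for every sentence $\varphi$. A concluding remark would note that the constructions in parts 1 and 2 are mutually inverse, by uniqueness of the extension.
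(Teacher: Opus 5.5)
Your proposal is correct and follows essentially the same route as the paper. Part 1 is the same direct check of (D1)--(D2). In part 2, both arguments get finite additivity on the algebra of sets $\llbracket\varphi\rrbracket$ (your $\langle\varphi\rangle$) from the completeness theorem, upgrade it to countable additivity via compactness, and then apply Carath\'eodory; you phrase the compactness step as a countable disjoint union of clopen sets being essentially finite, the paper as continuity from above at $\emptyset$. Your version is, if anything, slightly more careful: you verify that $\mu_P$ is well defined on provably equivalent sentences and that the generated $\sigma$-algebra is the Borel one, both of which the paper leaves implicit.
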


\begin{proof}
    Let $\pazocal{B}:=\{\llbracket\varphi\rrbracket: \varphi \in \pazocal{SL}^+_A\}$, where $\llbracket\varphi\rrbracket:=\{[\pazocal{M}]_\equiv: M \in Mod(\mathsf{PA}^+) \land M \vDash \varphi\}.$

    For (1), let $\mu: \sigma(\pazocal{B}) \to [0,1]$ be a probability measure and define $P_\mu(\varphi) := \mu(\llbracket\varphi\rrbracket)$ for all $\varphi \in \pazocal{SL}_A.$
    Then if $\mathsf{PA}^+ \vdash \varphi$ we have $P_\mu(\varphi)=1$.
    If $\mathsf{PA}^+ \vdash \neg(\varphi \land \psi)$ then $\llbracket\varphi\rrbracket \cap \llbracket\psi\rrbracket = \emptyset$, so 
    \[P_\mu(\varphi \lor \psi) = \mu(\llbracket\varphi\rrbracket \cup \llbracket\psi\rrbracket)=\mu(\llbracket\varphi\rrbracket)+\mu(\llbracket\psi\rrbracket)=P_\mu(\varphi) + P_\mu(\psi).\]

    For (2), let $P$ be a deductively-closed probability function and define $\mu_P(\llbracket\varphi\rrbracket):=P(\varphi).$ 
    It suffices to show that $\mu_P$ is finitely additive and continuous from above at $\emptyset$ on $\sigma(\pazocal{B})$ (\cite{ash2000probability}).

    First we show that $\mu_P$ is finitely additive. Let $\llbracket\varphi\rrbracket \cap \llbracket\psi\rrbracket=\emptyset.$ Then every model $\pazocal{M}$ satisfies $\pazocal{M} \vDash \neg(\varphi \land \psi)$, and so $\mathsf{PA}^+ \vdash \neg(\varphi \land \psi).$ Therefore $\mu_P(\llbracket\varphi\rrbracket \cup \llbracket\psi\rrbracket)=P(\varphi \lor \psi)=P(\varphi)+P(\psi)=\mu_P(\llbracket\varphi\rrbracket)+\mu_P(\llbracket\psi\rrbracket).$ 

    Second we show that $\mu_P$ is continuous from above at $\emptyset.$
    Suppose $\langle\llbracket\varphi_j\rrbracket \mid j \in J \rangle$ is a countable sequence of sets such that $\llbracket\varphi_{j+1}\rrbracket\subseteq \llbracket\varphi_j\rrbracket$ for all $j \in J$ and $\bigcap_{j\in J} \llbracket\varphi_j\rrbracket=\emptyset.$
    By the Compactness Theorem for first-order logic, since there is no model satisfying all of $\{\varphi_j \mid j \in J\}$ it must be the case that for some finite set $\{\varphi_1, \ldots, \varphi_n\}$ there is no model $\pazocal{M}$ satisfying $\pazocal{M} \vDash \bigwedge_{i \leq n} \varphi_i.$ 
    Therefore $\mathsf{PA}^+ \vdash \neg (\bigwedge_{i \leq n}\varphi_i).$ 
    Thus $P\left(\bigwedge_{i \leq n}\varphi_i \right)=0,$ hence $\mu_P\left(\big\llbracket\bigwedge_{i \leq n}\varphi_i\big\rrbracket\right)=0.$ 
    But since for any $m>n$ we have $\big\llbracket\bigwedge_{i \leq m}\varphi_i\big\rrbracket \subseteq \big\llbracket\bigwedge_{i \leq n}\varphi_i\big\rrbracket,$ it follows that $\lim_{n\to\infty} \mu_P\left(\big\llbracket\bigwedge_{i \leq n}\varphi_i\big\rrbracket\right)= 0$.

    Therefore $\mu_P$ is countably additive, and by the Carathéodory Extension Theorem (\cite{ash2000probability}) extends to a countably additive Borel probability measure on $\sigma(\pazocal{B}).$
\end{proof}

So, as with Gaifman-Snir probability functions, each deductively-closed probability function corresponds to a unique countably-additive probability measure on the space of models (up to elementary equivalence) of $\mathsf{PA}^+$.
Each equivalence class $[\pazocal{M}]_\equiv$ determines a complete, consistent extension of $\mathsf{PA}^+$, namely $Th(\pazocal{M})$ for any (and hence all) $\pazocal{M} \in [\pazocal{M}]_\equiv$.
So another way to state Lemma \ref{lem:DC Measure on Mod Space} is: the support of a deductively-closed probability function contains only completions of $\mathsf{PA}^+$.
This is the key insight required for the main theorem of this section, and we make it precise as follows.

%Since we want to analyze the arithmetical complexity of deductively-closed probability functions it will be convenient to represent sets of formulae in Cantor space, $2^\omega$.
%As is common in computability theory we may identify a set $A \subseteq \omega$ of natural numbers with its characteristic sequence $\chi_A \in %Letting $\chi_A(n)$ denote the $n^{\text{th}}$ digit of the sequence, we define $\chi_A$ as
%\[n \in A \iff \chi_A(n)=1.\]
%We will abuse notation and write, e.g., $A$ for both the set and its characteristic sequence.
%Since we have fixed a Gödel numbering we may also identify sets of (Gödel numbers of) formulae of $\pazocal{L}_A$ with sequences by the correspondence
%\[\ulcorner \varphi \urcorner \in B \iff B(n)=1.\]

\begin{theorem}\label{thm:No DC Probs Below Delta_2}
    There are no $\Sigma_1$-definable or $\Pi_1$-definable deductively-closed probability functions on $\pazocal{SL}_A$.
\end{theorem}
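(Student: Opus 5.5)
The plan is to reduce both cases to computability of $P_\vdash$, and then show that a computable deductively-closed probability function would yield a computable (hence $\Sigma_1$) complete consistent extension of $\mathsf{PA}$. That contradicts the earlier theorem that there are no $\Sigma_1$- or $\Pi_1$-definable complete consistent extensions.

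\emph{Step 1: reduction to computability.} Suppose $P_\vdash$ is defined by a $\Sigma_1$ formula $\varphi(x,n,y)$. By the conventions of \S2, $\varphi$ defines a total rational-valued approximation $g(x,n)$ with $|P_\vdash(\psi) - g(\ulcorner\psi\urcorner,n)| \le 2^{-n}$. The graph of $g$ is $\Sigma_1$. Its complement is also $\Sigma_1$, since
\[g(x,n)\neq y \iff \exists y'\,(y'\neq y \wedge \varphi(x,n,y')),\]
so the graph is $\Delta_1$ and $g$ is computable. The $\Pi_1$ case is symmetric: there the complement of the graph is $\Sigma_1$, and uniqueness lets us recover the graph as $\Sigma_1$ in the same way. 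Either way, $P_\vdash$ is a computable real-valued function, uniformly in the sentence.

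\emph{Step 2: elementary facts.} I would record the consequences of (D1) and (D2) needed below, exactly as in the Paris-style proposition for Gaifman--Snir functions:
\begin{itemize}
\item $P_\vdash(\neg\psi) = 1 - P_\vdash(\psi)$;
\item if $\mathsf{PA}\vdash\neg\psi$ then $P_\vdash(\psi)=0$;
\item $P_\vdash(\chi) = P_\vdash(\chi\wedge\psi) + P_\vdash(\chi\wedge\neg\psi)$, using (D2) and provable equivalence.
\end{itemize}
Alternatively these can be read off the measure given by Lemma~\ref{lem:DC Measure on Mod Space}. The crucial consequence is that any sentence of positive probability is consistent with $\mathsf{PA}$.

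\emph{Step 3: computable Lindenbaum construction guided by $P_\vdash$.} Fix a computable enumeration $\nu$ of $\pazocal{SL}_A$, and set $\chi_0 := \top$, which has probability $1$. The invariant is that $P_\vdash(\chi_k) > 0$, and that we know a rational $r_k > 0$ with $P_\vdash(\chi_k) \ge r_k$; initially $r_0 = 1$. At stage $k$, let $\psi$ be the sentence with $\nu(k)=\ulcorner\psi\urcorner$. One of $\chi_k\wedge\psi$ and $\chi_k\wedge\neg\psi$ has probability at least $r_k/2$. Compute $g$ on both at precision $2^{-n}$ with $2^{-n} < r_k/8$. Pick a conjunct whose approximation is at least $r_k/2 - r_k/8$; one must exist, and the chosen conjunct then has true probability at least $r_k/4$. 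Set $\chi_{k+1}$ to be that conjunct and $r_{k+1} := r_k/4$. This is a computable procedure.

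Let $\Gamma$ be the set of sentences $\psi$ for which the choice at the stage handling $\psi$ was $\psi$. Then $\Gamma$ decides every sentence, and it is computable: to decide $\psi$, run the construction up to its stage. $\Gamma$ extends $\mathsf{PA}$, because a theorem $\psi$ has $P_\vdash(\chi\wedge\neg\psi)=0$ and so $\neg\psi$ is never chosen. $\Gamma$ is consistent, because any finite subset is implied by some $\chi_k$, and $P_\vdash(\chi_k)>0$ forces $\chi_k$ to be consistent with $\mathsf{PA}$. So $\Gamma$ is a $\Delta_1$, hence $\Sigma_1$, complete consistent extension of $\mathsf{PA}$, which contradicts the earlier theorem (equivalently, Gödel--Rosser incompleteness).

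\emph{Main obstacle.} I expect Step 1 to be the delicate part: pinning down precisely that ``$\Sigma_1$-definable'' in the approximation sense of \S2 really yields a computable approximation. This relies on totality and uniqueness of $y$ being true in $\mathbb{N}$, not merely provable. The choice in Step 3 must also use only strict, effectively checkable rational bounds, which is why the construction keeps the explicit lower bounds $r_k$ rather than testing whether $P_\vdash(\chi_k\wedge\psi)>0$, a test that is not decidable.
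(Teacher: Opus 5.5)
Your $\Sigma_1$ case is correct and is essentially the paper's argument. Step~3 merges the paper's effective ``leftmost path'' lemma with the lemma that a path of positive probability is a completion. Your explicit lower bounds $r_k$ are also a cleaner way to make each choice effective than testing $P([\sigma])>0$, which is only c.e.

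The gap is the $\Pi_1$ half of Step~1. Your $\Sigma_1$ trick works because $\exists y'\,(y'\neq y\wedge\varphi(x,n,y'))$ is an existential over a $\Sigma_1$ matrix. When $\varphi$ is $\Pi_1$, uniqueness only lets you rewrite the graph as $\forall y'\,(y'\neq y\rightarrow\neg\varphi(x,n,y'))$. That is a universal over a $\Sigma_1$ matrix, hence $\Pi_2$, so no $\Sigma_1$ description comes out. Totality and uniqueness genuinely do not help here. The map sending $e$ to $s+1$ if the $e$-th machine halts on input $e$ at exactly stage $s$, and to $0$ if it never halts, is total with a $\Pi_1$ graph, yet it computes the halting problem.

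In fact, under the literal convention of \S2 the $\Pi_1$ half of the statement is false, so no argument can close this step. Take the $\Delta_2$ completion $\Gamma$ used to build $P_\Gamma$ and a computable approximation $\Gamma_s$ to it. Let $m_x$ be the least $m$ with $\Gamma_t(x)=\Gamma(x)$ for all $t\geq m$, and put $g(x,n):=\Gamma(x)+(1-2\Gamma(x))\,2^{-n-2}/(m_x+1)$. Then:
\begin{itemize}
\item $|g(x,n)-P_\Gamma(x)|\leq 2^{-n}$;
\item the pair $(\Gamma(x),m_x)$ can be decoded from $(n,g(x,n))$;
\item $g(x,n)=q$ holds iff $q$ decodes to some $(b,m)$ with $\forall t\geq m\,\Gamma_t(x)=b$ and ($m=0$ or $\Gamma_{m-1}(x)\neq b$), which is a $\Pi_1$ condition.
\end{itemize}
So the deductively-closed function $P_\Gamma$ is $\Pi_1$-definable in that sense.

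The statement is true only under the reading the paper signals when it says $\Sigma_1$ and $\Pi_1$ functions approach their values from below and from above. That means left-c.e.\ and right-c.e., uniformly in the sentence. Under that reading the repair is one line, using the identity you already recorded in Step~2. If $P_\vdash$ is right-c.e., then $P_\vdash(\psi)=1-P_\vdash(\neg\psi)$ shows it is also left-c.e., hence computable; the left-c.e.\ case is symmetric. Your Step~3 then disposes of both cases at once, and you never need the paper's separate ``parallel argument'' through the non-existence of $\Pi_1$ completions.
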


The proof of this theorem is a combination of two simple lemmas: a ``leftmost-path'' construction, and a property of deductively-closed probability functions.

% Should we skip the translation into Cantor space and just work with the measure on model space?
\begin{lemma}\label{lem:P-Computable Set}
    Let $P$ be a probability measure on $2^\omega$.
    Then there is a sequence $X \in 2^\omega$ such that for all $n$, $P(X \upharpoonright n) > 0$, and $X$ is $\Delta_1^P$-definable.
\end{lemma}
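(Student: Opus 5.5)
The plan is to build $X$ one bit at a time, as a greedy path through the binary tree that never enters a cylinder of measure zero, using $P$ only through its rational approximations. The difficulty is that with $P$ as an oracle we only receive rational values within $2^{-k}$ of the true measures, so the test ``$P(\sigma 0) > 0$'' is not decidable from $P$. A genuine leftmost-path construction would need that test. I will therefore not follow the leftmost positive path literally. Instead I will choose, at each step, a child that \emph{provably} carries a definite fraction of its parent's mass; this choice is decidable from the approximations.

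The construction proceeds as follows. Along with the finite string $\sigma_n = X \upharpoonright n$, I maintain a rational lower bound $r_n$ with $P(\sigma_n) \geq r_n > 0$. Start with $\sigma_0$ the empty string and $r_0 = 1$, since $P(2^\omega) = 1$. At stage $n$, query the oracle for approximations $q_0, q_1$ to $P(\sigma_n 0)$ and $P(\sigma_n 1)$ with error at most $2^{-k} \leq r_n/8$. The index $k$ is computable from $r_n$.

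By additivity, $P(\sigma_n 0) + P(\sigma_n 1) = P(\sigma_n) \geq r_n$, so some $i \in \{0,1\}$ satisfies $P(\sigma_n i) \geq r_n/2$. For that $i$ we get $q_i \geq r_n/2 - r_n/8 = 3r_n/8$. Hence the search for an $i$ with $q_i \geq 3r_n/8$ succeeds; take the least such $i$. For the chosen $i$ we then have $P(\sigma_n i) \geq q_i - r_n/8 \geq r_n/4$. Set $\sigma_{n+1} = \sigma_n i$ and $r_{n+1} = r_n/4$.

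By induction, $P(X \upharpoonright n) \geq 4^{-n} > 0$ for every $n$. The map $n \mapsto X(n)$ is computed uniformly by this procedure, which consults only finitely many approximation values of $P$. So $X$ is computable relative to (an approximation function for) $P$, i.e.\ $\Delta_1^P$-definable in the sense fixed in the Notation section.

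I expect the main obstacle to be exactly the non-decidability of positivity noted at the start; the quantitative lower bound $r_n$ is what gets around it. A smaller point is interpreting ``$P$ as a parameter'': I take it to mean access to a fixed approximation function $g$ for $n \mapsto P(\sigma)$ (indexing strings $\sigma$ by codes), with $X$ computable from $g$. This matches how real-valued functions are defined earlier in the paper.
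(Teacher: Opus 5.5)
Your proof is correct, and it departs from the paper's at exactly the step you flagged.

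The paper also descends greedily through the cylinders of positive measure. It selects the next bit via the test ``$P([\sigma i])>0$'', using only two facts: this condition is c.e.\ in $P$, and some child always satisfies it. Its wording (``the least number satisfying\ldots'', ``the length-lexicographically least string'') cannot be read literally, because the leftmost positive path need not be $P$-computable. For example, take the computable measure $P=\sum_s 2^{-s-1}\delta_{Y_s}$, where $Y_s$ is the characteristic sequence of the complement of the stage-$s$ halting set $K_s$. Its leftmost positive path is the complement of $K$. The paper's argument does work under the reading its last sentence suggests (``the value of each $X_n$ is determined at some finite time''). At each level, search in parallel for a certificate that either child has positive measure, and take whichever appears first. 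That is an unbounded search which terminates, but with no bound known in advance.

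Your invariant $P(\sigma_n)\geq r_n$ replaces that search with a single test at the predetermined precision $r_n/8$, whose threshold is guaranteed to be met. This gives an unambiguous choice rule and the explicit bound $P(X\upharpoonright n)\geq 4^{-n}$. The paper's version, so read, is shorter and needs only the semi-decidability of positivity. Yours needs two-sided approximations with a known error. That is exactly what the paper's convention for real-valued functions provides, so nothing is lost.
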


\begin{proof}
    Begin by checking whether $P([1])>0$ or $P([0])>0$ (equivalently, one can check whether $P([1])<1$ or $P([0])<1$).
    These conditions are c.e. in $P$.
    Moreover at least one of these conditions must hold. 
    Let $X_1$ be the least number satisfying either $P([X_1])>0$ or $P([1-X_1])<1$.
    Recursively define $X_{n+1}$ by checking $P([X_n0])>0$ and $P([X_n1])>0$.
    As before at least one must hold; let $X_{n+1}$ be the length-lexicrographically least string satisfying this condition.

    By construction, for all $n$, $P(X \upharpoonright n)>0$.
    Since the value of each $X_n$ is determined at some finite time, $X = \lim_n X_n$ is computable from $P$, i.e., is $\Delta_1^P$-definable.
\end{proof}

\begin{lemma}\label{lem:Support is Completions}
    Let $P_\vdash$ be a deductively-closed probability function on $\mathsf{PA}^+$.
    If $X \in 2^\omega$ satisfies $P_\vdash^*(X \upharpoonright n)>0$ for all $n$, then $X$ is (the characteristic sequence of) a complete consistent extension of $\mathsf{PA}$.
\end{lemma}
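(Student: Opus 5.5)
We first make precise what $P_\vdash^*$ is. Via the fixed Gödel numbering, a sequence $X\in 2^\omega$ corresponds to the set of sentences $\{\varphi \mid X(\ulcorner\varphi\urcorner)=1\}$. A finite string $\sigma$ of length $n$ then specifies, for each code $k<n$ that codes a sentence $\varphi_k$, whether $\varphi_k$ is in or out. Let $\theta_\sigma:=\bigwedge_{k<n,\,\sigma(k)=1}\varphi_k \wedge \bigwedge_{k<n,\,\sigma(k)=0}\neg\varphi_k$, where codes that are not sentences are ignored. We take $P_\vdash^*([\sigma]) := P_\vdash(\theta_\sigma)$ whenever $\sigma$ assigns $0$ to every non-sentence code, and $0$ otherwise.

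By (D1), (D2) and the derived facts, $P_\vdash(\theta_{\sigma0})+P_\vdash(\theta_{\sigma1})=P_\vdash(\theta_\sigma)$. This uses provable equivalence invariance, which follows from (D1) and (D2) exactly as in the proposition from \cite{Paris2011-PARPIL-3}. So $P_\vdash^*$ is a premeasure on cylinders and extends to a measure on $2^\omega$. Equivalently, $P_\vdash^*$ is the pushforward of $\mu_{P_\vdash}$ from Lemma \ref{lem:DC Measure on Mod Space} under $[\pazocal{M}]_\equiv\mapsto Th(\pazocal{M})$.

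The key observation is that if $P_\vdash(\theta)>0$ then $\mathsf{PA}^+\nvdash\neg\theta$. Otherwise, by (D1), $P_\vdash(\neg\theta)=1$, and hence $P_\vdash(\theta)=0$. So the hypothesis gives that every $\theta_{X\upharpoonright n}$ is consistent with $\mathsf{PA}^+$.

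We then verify the three properties of the set $T_X$ coded by $X$.

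\textbf{Consistency.} Suppose $T_X\cup\mathsf{PA}^+$ were inconsistent. Then finitely many sentences in $T_X$ refute $\mathsf{PA}^+$. Choose $n$ beyond all their codes. Then $\theta_{X\upharpoonright n}$ contains these sentences as conjuncts, so $\mathsf{PA}^+\vdash\neg\theta_{X\upharpoonright n}$, a contradiction.

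\textbf{Completeness.} Take any sentence $\varphi$ and pick $n$ exceeding both $\ulcorner\varphi\urcorner$ and $\ulcorner\neg\varphi\urcorner$. If neither $\varphi$ nor $\neg\varphi$ were in $T_X$, then $\theta_{X\upharpoonright n}$ would contain the conjuncts $\neg\varphi$ and $\neg\neg\varphi$, making it refutable. Hence at least one of $\varphi,\neg\varphi$ lies in $T_X$.

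\textbf{Containing $\mathsf{PA}$.} If $\mathsf{PA}^+\vdash\varphi$ but $X(\ulcorner\varphi\urcorner)=0$, then for large $n$ the conjunction $\theta_{X\upharpoonright n}$ contains $\neg\varphi$ and is refutable, again a contradiction.

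Consistency then also rules out having both $\varphi$ and $\neg\varphi$ in $T_X$. Deductive closure follows similarly: if $T_X\vdash\varphi$ but $\varphi\notin T_X$, then $\neg\varphi\in T_X$, contradicting consistency. So $X$ is the characteristic sequence of a complete consistent extension of $\mathsf{PA}^+$, and in particular of $\mathsf{PA}$.

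The main obstacle is the bookkeeping in the first step. One must fix $P_\vdash^*$ so that it is genuinely a measure on $2^\omega$, which makes Lemma \ref{lem:P-Computable Set} applicable, and so that positivity on cylinders translates into consistency of finite conjunctions. This requires handling non-sentence codes and invoking invariance of $P_\vdash$ under $\mathsf{PA}^+$-provable equivalence. Once that translation is in place, the rest is a routine compactness-style argument: it uses only finite conjunctions and never appeals to the model space beyond the consistency of each $\theta_\sigma$.
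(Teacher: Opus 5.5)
Your proof is correct. The one real difference from the paper's proof is how you turn positivity into consistency.

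\textbf{The paper's route.} It invokes Lemma~\ref{lem:DC Measure on Mod Space}. Positivity of $\mu_{P_\vdash}\bigl(\bigcap_{i\le n}\llbracket\psi_i\rrbracket\bigr)$ means some model of $\mathsf{PA}^+$ satisfies $\bigwedge_{i\le n}\psi_i$. Semantic compactness then gives a single model $\pazocal{M}^*$ with $Th(\pazocal{M}^*)$ coded by $X$. Consistency, completeness, containment of $\mathsf{PA}^+$ and deductive closure all follow at once from $X$ being the theory of a model.

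\textbf{Your route.} You get irrefutability of each $\theta_{X\upharpoonright n}$ directly from (D1) and the complement rule. You then check the four properties by hand, using only the finiteness of proofs. This stays entirely syntactic and never needs the measure $\mu_{P_\vdash}$ on $\mathcal{M}_\pazocal{L}$, so it is more elementary and self-contained. The paper's version is shorter once Lemma~\ref{lem:DC Measure on Mod Space} is available.

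\textbf{Your definition of $P^*_\vdash$.} Your explicit construction, including the convention for codes that are not sentences and the remark that it is the pushforward of $\mu_{P_\vdash}$, fills in something the paper leaves implicit. The paper never defines $P^*_\vdash$, and it tacitly assumes every natural number codes a sentence.

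\textbf{One small slip.} At a non-sentence code $n=|\sigma|$ you have $\theta_{\sigma1}=\theta_\sigma$. So the identity $P_\vdash(\theta_{\sigma0})+P_\vdash(\theta_{\sigma1})=P_\vdash(\theta_\sigma)$ fails as written there. The additivity should be stated for $P^*_\vdash$ on cylinders; your zeroing convention is exactly what makes it hold.
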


\begin{proof}
    By Lemma \ref{lem:DC Measure on Mod Space}, there is a corresponding probability measure $\mu_{P_\vdash}$ on $\mathcal{M}_\pazocal{L}$.
    Let $X \in 2^\omega$ be such that $P_\vdash^*(X \upharpoonright n) > 0$ for all $n$.
    Fix the enumeration $\{\varphi_n\}_{n \in \omega}$ of $\pazocal{SL}_A$  such that $\ulcorner \varphi_m \urcorner = m$.
    Let $\psi_i = \varphi_i$ if $X_i=1$, and $\psi_i = \neg\varphi_i$ otherwise.
    Then we have
    \[\mu_{P_\vdash}\left(\bigcap_{i \leq n}\llbracket\psi_i\rrbracket \right)>0\]
    for all $n$. 
    Hence for all $n$ there is a model $\pazocal{M} \vDash \bigwedge_{i \leq n}\varphi_i$, and so by Compactness there is a model $\pazocal{M}^*$ such that
    \[\varphi_i \in Th(\pazocal{M}^*) \iff \pazocal{M}^* \vDash \varphi_i \iff X_i=1.\]
    Thus $X$ is the characteristic sequence of a complete, consistent extension of $\mathsf{PA}^+$.
\end{proof}

\begin{proof}[Proof of Theorem \ref{thm:No DC Probs Below Delta_2}]
    Suppose $P_\vdash$ is a $\Sigma_1$-definable deductively-closed probability function.
    Then its corresponding probability measure $P_\vdash^*$ on $2^\omega$ is also $\Sigma_1$-definable.
    By Lemma \ref{lem:P-Computable Set}, there is a sequence $X \in 2^\omega$ such that, for all $n$, $P_\vdash^*(X \upharpoonright n)>0$, and $X$ is $\Delta_1^{P^*_\vdash}$-definable, and hence is $\Sigma_1$-definable.
    But by Lemma \ref{lem:Support is Completions}, $X$ is the characteristic sequence of a complete consistent extension of $\mathsf{PA}$.
    But since there are no $\Sigma_1$ completions of $\mathsf{PA}$, this is impossible.
    Since there are no $\Pi_1$ completions of $\mathsf{PA}$, by a parallel argument we establish that there are no $\Pi_1$-definable deductively-closed probability functions on $\mathsf{PA}^+$.
\end{proof}

\section{Conclusion}

We began with two natural constraints on a useful inductive logic: the language should be sufficiently rich to express the kinds of hypotheses that interest scientists, and the probabilities assigned to sentences in this language ought to be, in some sense, accessible.
We glossed the first requirement as an assertion that a useful inductive logic should be written in a language at least as expressive as first-order arithmetic;
we glossed the second as the claim that the probability function should be computable.
We then showed that, following the natural axiomatization due to Gaifman and Snir, these two requirements are not jointly satisfiable.
There is no Gaifman-Snir probability function that is arithmetically definable, much less computable.
We identified the Gaifman condition, (G3), as the culprit.

So, we sought alternatives.
We showed that the standard domain assumptions, namely that the language is interpreted over the standard model $\mathbb{N}$, actually \textit{implies} the Gaifman condition in the presence of the other axioms.
Hence the domain assumptions must be jettisoned if we wish to define a computable probability function over sentences.
In the spirit of the original Gaifman-Snir axioms, we suggested a weaker set of axioms which relied on provability in $\mathsf{PA}$ rather than truth in $\mathbb{N}$.
We showed that this improves the situation considerably: there are $\Delta_2$-definable probability functions that satisfy our provability axioms.
But, unfortunately, we showed that this bound cannot be improved.

So we still have not secured a computable probability function over the sentences of $\mathsf{PA}$ that tracks their logical structure in a natural way.
In retrospect this is perhaps not too surprising.
The set of theorems of $\mathsf{PA}$ is not computable;
any probability function that assigns probability 1 to all theorems must have some means for determining theoremhood, and this cannot be a decidable procedure.
Indeed the requirement that the probability function track truth in $\mathbb{N}$ or provability in $\mathsf{PA}$ is a particular instance of the well-known problem of logical omniscience in Bayesian epistemology (\cite{Hacking1967-HACSMR}, \cite{Garber1983}, \cite{Pettigrew2021logical}).
So our results sharpen those discussions by demonstrating that a logically omniscient probability function on sentences of arithmetic is not only unrealistic, it cannot even be computed by a Turing machine.

Our results also indicate natural avenues for further research.
On one hand we are still unsure how to define a computable probability function that, in some sense, learns \textit{logically}.
Of course, one could define a much weaker kind of probability function that is completely ignorant of the inferential relations between sentences in its domain; but, intuitively, this kind of probability function obviates the motivation for using a logical language in the first place.
So what we need is a middle ground.
Taking inspiration from the literature on logical omniscience, one could try defining probabilities over ``impossible worlds'': sets of sentences that are not necessarily consistent, for example.
Rather than requiring that the probability function ``know'' all theorems of $\mathsf{PA}$ one could instead only require that $P(B \mid A, A \to B)=1$, i.e., that if an agent has learned that $A$ and learned that $A$ implies $B$, then they are capable of concluding that $B$.
Exploring this proposal and related intuitions may be fruitful.

On the other hand, real scientific hypotheses require much more expressive power than arithmetic.
Statements in physics, chemistry, mathematical biology, statistics, etc. require at least the ability to express concepts from real and complex analysis, including areas from differential geometry to measure theory.
So one might conclude that a \textit{truly} useful inductive logic ought to be defined on a language at least as expressive as \textit{second-order} arithmetic (see \cite{Simpson_2009} for more).
This line of research pulls in the opposite direction of the previous one: moving to an even richer language like second-order arithmetic will likely make it even more challenging to define an interesting kind of computable probability function.
Nonetheless, actual scientific practice calls for this extension, and we expect that it will lead to a deep and interesting area of research.

\printbibliography

\end{document}